\numberwithin{equation}{section}
\theoremstyle{plain}
	\newtheorem{theorem}{Theorem}
		\numberwithin{theorem}{section}
	\newtheorem{lemma}[theorem]{Lemma}
	\newtheorem{fact}[theorem]{Fact}
	\newtheorem{proposition}[theorem]{Proposition}
	\newtheorem*{theorem*}{Theorem}
	\newtheorem*{lemma*}{Lemma}
	\newtheorem*{cor*}{Corollary}
	\newtheorem*{conj*}{Conjecture}
\theoremstyle{definition}
	\newtheorem*{example*}{Example}
	\newtheorem{definition}[theorem]{Definition}
	\newtheorem{remark}[theorem]{Remark}
\begin{document}

\title[Ore extensions and the Dixmier-Moeglin equivalence]{Ore extensions of commutative rings and the Dixmier-Moeglin equivalence}

\author[Jason P. Bell]{Jason P. Bell}
\address{Department of Pure Mathematics \\University of Waterloo \\ Waterloo, ON \\ Canada \\ N2L 3G1}
\email{jpbell@uwaterloo.ca}

\author[L\'eon Burkhardt]{L\'eon Burkhardt}
\address{Departement of Mathematics, ETH Zurich, R\"amistrasse 101, 8092 Zurich, Switzerland}
\email{lburkhardt@student.ethz.ch}

\author[Nicholas Priebe]{Nicholas Priebe}
\address{Department of Pure Mathematics \\University of Waterloo \\ Waterloo, ON \\ Canada \\ N2L 3G1}
\email{nicholas.priebe@uwaterloo.ca}



\keywords{primitive ideals, Dixmier-Moeglin equivalence, prime spectrum}
\subjclass[2020]{16D60, 16A20, 16A32}
\maketitle
\begin{abstract}
We consider Ore extensions of the form $T:=R[x;\sigma,\delta]$ with $R$ a commutative integral domain that is finitely generated over a field $k$.  We show that if $T$ has Gelfand-Kirillov dimension less than four then a prime ideal $P\in {\rm Spec}(T)$ is primitive if and only if $\{P\}$ is locally closed in ${\rm Spec}(T)$, if and only if the Goldie ring of quotients of $T/P$ has centre that is an algebraic extension of $k$.  We also show that there are examples for which these equivalences do not all hold for $T$ of integer Gelfand-Kirillov dimension greater than or equal to $4$.
\end{abstract}

\section{Introduction}
In the study of rings, it is of special importance to understand the irreducible representations of a ring; that is, to understand its simple left or right modules.  This approach is especially fruitful in the study of finite groups where it allows one to decompose the group algebra of a finite group over an algebraically closed field as a finite product of matrix rings, assuming that the characteristic of the field does not divide the order of the group.  In general, however, it is a difficult problem to classify the irreducible representations of a ring and so one often settles for a coarser understanding of the representation theory of an algebra by instead classifying its primitive ideals; that is, the ideals that annihilate a simple left module of the ring.

In general, the set of annihilators of simple left modules and the set of annihilators of simple right modules do not coincide. Bergman \cite{Ber}, while still an undergraduate, gave an example of a ring in which $(0)$ is a right primitive but not a left primitive ideal; in practice, however, the two sets often agree and so we will not concern ourselves with such pathological examples.  

The primitive ideals of a ring are necessarily prime and thus ${\rm Prim}(R)$, the set of primitive ideals of $R$, forms a distinguished subset of ${\rm Spec}(R)$.  Our objective then becomes to identify the prime ideals in ${\rm Spec}(R)$ that are primitive.  
A fundamental result in this direction is the pioneering work of Dixmier and Moeglin \cite{Dix77, Moe80}, who together showed that if $L$ is a finite-dimensional complex Lie algebra then the primitive ideals of the enveloping algebra $U(L)$ are just the prime ideals of ${\rm Spec}(U(L))$ that are locally closed in the Zariski topology; in addition, they showed that the primitive prime ideals are exactly those that are \emph{rational}; that is, primes $P$ such that the Goldie ring of quotients of $U(L)/P$ has centre exactly the base field $\mathbb{C}$.  In general, for a noetherian algebra $R$ over a field $k$, a prime ideal $P$ of $R$ is rational if ${\rm Frac}(R/P)$ has centre that is an algebraic extension of the base field $k$, where for a prime Goldie ring $S$ we let ${\rm Frac}(S)$ denote its Goldie ring of quotients, which is a simple Artinian ring that plays the role that the field of fractions plays for commutative domains.

Thus Dixmier and Moeglin's fundamental result says that for $P\in {\rm Spec}(U(L))$ we have the following equivalences:
$$P ~{\bf locally~closed}\iff P~{\bf primitive}\iff P~{\bf rational}.$$
 
These equivalences have since been shown to hold for other classes of rings, including (and this list is by no means exhaustive) affine algebras satisfying a polynomial identity (PI algebras, for short) \cite[2.6]{Von1}, group algebras of nilpotent-by-finite groups \cite{Z}, many quantum algebras \cite{GoLet}, \cite[II.8.5]{BrGo}), affine cocommutative Hopf algebras of finite Gelfand-Kirillov dimension in characteristic zero \cite{BL14}, Hopf Ore extensions of affine commutative Hopf algebras \cite{BSM18}, noetherian twisted homogeneous coordinate rings of complex curves and surfaces \cite{BRS10}, Hopf algebras of Gelfand-Kirillov dimension two (under mild homological assumptions) \cite{GZ}, finitely generated complex connected graded domains of Gelfand-Kirillov dimension two that are generated in degree one (cf. Artin and Stafford \cite{AS} and \cite{BRS10}).  

An important remark, which we make use of throughout the paper, is that for algebras that satisfy the Nullstellensatz, to prove that the above equivalences hold, it suffices to prove that rational prime ideals are necessarily locally closed (see, for example, \cite[Theorem 2.3]{Importance}).  For background on the Nullstellensatz, we refer the reader to \cite[II.7]{BrGo}. In particular, the algebras we consider in this paper all satisfy the Nullstellensatz by a result of Irving \cite[Theorem 2]{Irving} and so we will make use of the following facts.
\begin{fact} \label{fact2}
Let $R$ be a commutative Noetherian Jacobson ring.  Then an Ore extension $R[x;\sigma,\delta]$ satisfies the Nullstellensatz. 
\end{fact}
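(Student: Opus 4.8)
The plan is to obtain the statement from Irving's work \cite[Theorem 2]{Irving}; the real content lies in unwinding the relevant definitions and checking that its hypotheses apply in our setting. Recall (see \cite[II.7]{BrGo}) that a $k$-algebra $A$ \emph{satisfies the Nullstellensatz} over $k$ precisely when $A$ is a Jacobson ring and, for every simple left $A$-module $S$, the endomorphism division ring $\operatorname{End}_A(S)$ is algebraic over $k$. So for $T := R[x;\sigma,\delta]$ there are two properties to establish, and the hypothesis that $R$ is commutative Noetherian is exactly what allows the Ore-extension Nullstellensatz machinery to run; I would proceed in two steps.

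First I would verify that $T$ is a Jacobson ring. This property passes up from $R$: working along the filtration of $T$ by degree in $x$, one uses that for a prime $P$ of $T$ the contraction $P\cap R$ is an ideal of $R$ compatible with $\sigma$ and $\delta$, that $R/(P\cap R)$ is again commutative Noetherian and Jacobson, and that a nonzero ideal of $T/P$ must reach down the filtration to a nonzero ideal of $R/(P\cap R)$; since the latter ring has zero Jacobson radical, it follows that $J(T/P)=0$, and hence $T$ is Jacobson.

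The substance is the endomorphism property, for which the tool is generic flatness. One shows, as in the treatment of enveloping algebras and affine PI rings by Quillen, Duflo, Amitsur--Small and others, that an Ore extension of a commutative Noetherian ring is generically flat over any affine commutative $k$-subalgebra; the endomorphism property is then extracted by the standard Amitsur-style argument, since an element of $\operatorname{End}_T(S)$ transcendental over $k$, fed into generic flatness for an appropriate finitely generated module, would contradict the simplicity of $S$. I expect the only genuine obstacle to be the generic-flatness step when $\sigma$ and $\delta$ are \emph{simultaneously} nontrivial: the degree filtration of $T$ over $R$ does not split as cleanly as for an ordinary polynomial ring, and one must perform the relevant localizations compatibly with both $\sigma$ and $\delta$. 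Since this is precisely the difficulty that \cite[Theorem 2]{Irving} is designed to handle, for our purposes the proof reduces to invoking that theorem once the reductions above are in place.
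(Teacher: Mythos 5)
Your proposal is correct and matches the paper's treatment: the statement is recorded as a Fact precisely because it is Irving's result, and the paper likewise simply cites \cite[Theorem 2]{Irving}, which applies verbatim to $R[x;\sigma,\delta]$ for $R$ commutative Noetherian Jacobson. Your expository unwinding (Jacobson property plus algebraicity of endomorphism rings of simple modules, established via generic flatness) is a fair description of what Irving proves, but no further argument beyond the citation is needed or given in the paper.
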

\begin{fact} \label{fact}
Let $T$ be a noetherian algebra that satisfies the Nullstellensatz.  Then if
\begin{equation}
P~{\rm rational}\implies P ~{\rm locally~closed}\end{equation} for all $P\in {\rm Spec}(T)$ then $T$ satisfies the Dixmier-Moeglin equivalence.
\end{fact}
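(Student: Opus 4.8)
The plan is to prove, for every $P \in \mathrm{Spec}(T)$, the chain of implications
\[
P \ \text{locally closed} \implies P \ \text{primitive} \implies P \ \text{rational} \implies P \ \text{locally closed},
\]
in which the last arrow is the standing hypothesis. Closing this cycle shows that the three properties are equivalent, which is exactly the assertion that $T$ satisfies the Dixmier--Moeglin equivalence. For both of the remaining implications it is harmless to pass to $T/P$ and assume $P = (0)$: the algebra $T/P$ is again noetherian and satisfies the Nullstellensatz (the Jacobson property and the endomorphism property both descend to quotient algebras), while local closedness, primitivity, and rationality of $P$ in $T$ translate verbatim into the same properties of $(0)$ in $T/P$. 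So in what follows $T$ is prime.

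\emph{Locally closed $\implies$ primitive.} Since $(0)$ is the generic point, $\overline{\{(0)\}} = \mathrm{Spec}(T)$, so $\{(0)\}$ locally closed means $\{(0)\}$ is open; write its complement as $V(N)$ for a two-sided ideal $N$. As $(0) \notin V(N)$ we get $N \neq (0)$, and by construction $N$ is contained in every nonzero prime ideal of $T$. The Nullstellensatz forces $T$ to be a Jacobson ring, so its Jacobson radical $J(T)$ — the intersection of all primitive ideals — is contained in the intersection of all primes, namely $(0)$. If $(0)$ were not primitive, every primitive ideal would be a nonzero prime and would therefore contain $N$, giving $(0) = J(T) \supseteq N \neq (0)$, a contradiction. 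Hence $(0)$ is primitive.

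\emph{Primitive $\implies$ rational.} Let $M$ be a faithful simple left $T$-module and put $D := \mathrm{End}_T(M)$, a division ring which is algebraic over $k$ by the endomorphism clause of the Nullstellensatz. Being prime and noetherian, $T$ is a prime Goldie ring, hence has a simple Artinian classical ring of quotients $Q := \mathrm{Frac}(T) \cong M_n(\Delta)$ for some division ring $\Delta$. For any regular $c \in T$, multiplication by $c$ on $M$ has kernel and image that are $T$-submodules of the simple faithful module $M$, so it is bijective; therefore $M$ extends uniquely to a left $Q$-module, which is simple over $Q$ and so isomorphic to the column module $\Delta^{n}$. Since every $T$-linear self-map of $M$ is automatically $Q$-linear, $D = \mathrm{End}_Q(M) \cong \Delta^{\mathrm{op}}$, and hence the centre of $\mathrm{Frac}(T) = Q$, equal to $Z(M_n(\Delta)) \cong Z(\Delta) = Z(\Delta^{\mathrm{op}}) \cong Z(D)$, is algebraic over $k$. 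That is, $(0)$ is rational.

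Assembling the two implications with the hypothesis proves the Fact. The step I expect to require the most care is the passage from $T$ to $Q$ in the last implication: one must verify that $M$ genuinely becomes a $Q$-module — this is where primeness of $T$, hence the Ore property of the set of regular elements supplied by Goldie's theorem, is used — and that this does not enlarge the endomorphism ring, after which the Artin--Wedderburn structure of $Q$ finishes the argument. Everything else is formal manipulation of the two ingredients packaged in the Nullstellensatz, namely that $T$ is Jacobson and that endomorphism rings of simple $T$-modules are algebraic over $k$.
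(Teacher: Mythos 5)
The paper does not prove this Fact; it simply cites \cite[Theorem 2.3]{Importance} (see also \cite[II.7]{BrGo}), so your proposal has to be judged on its own merits. Your overall strategy is the right one: the hypothesis supplies ``rational $\Rightarrow$ locally closed,'' and one closes the cycle with ``locally closed $\Rightarrow$ primitive'' (from the Jacobson half of the Nullstellensatz) and ``primitive $\Rightarrow$ rational'' (from the endomorphism half). Your reduction to $P=(0)$ and your argument for ``locally closed $\Rightarrow$ primitive'' are correct.

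There is, however, a genuine gap in ``primitive $\Rightarrow$ rational.'' For a \emph{left} module $M$ over a noncommutative ring $T$, left multiplication by $c\in T$ is not a $T$-module endomorphism of $M$: its kernel $\{m: cm=0\}$ and image $cM$ are not $T$-submodules (e.g.\ $t(cm)=(tc)m\notin cM$ in general), so you cannot conclude that multiplication by a regular element is bijective on the simple faithful module $M$. Indeed the conclusion you draw from this --- that $M$ extends to a module over $Q=\mathrm{Frac}(T)$ --- is false in general: for the Weyl algebra $T=A_1(k)=k\langle x,\partial\rangle$, the module $M=T/T\partial\cong k[x]$ is simple and faithful, $\partial$ is regular, yet $\partial$ annihilates the generator; since $\partial$ is invertible in $\mathrm{Frac}(A_1)$, the module $M$ admits no $\mathrm{Frac}(A_1)$-module structure. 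Faithful simple modules over primitive noetherian rings are typically torsion over the set of regular elements, so the passage to $Q$ you flag as ``the step requiring the most care'' is in fact the step that fails. The correct argument does not extend $M$ over $Q$; instead, for each $z\in Z(\mathrm{Frac}(T))$ one considers the two-sided ideal $I=\{a\in T: za\in T\}$ (two-sided precisely because $z$ is central), observes that $I$ contains a regular element and hence $IM=M$ by simplicity and faithfulness, and then shows that $m\mapsto \sum (za_i)m_i$ (for $m=\sum a_im_i$, $a_i\in I$) is a well-defined element of $\mathrm{End}_T(M)$. This yields an embedding of fields $Z(\mathrm{Frac}(T))\hookrightarrow Z(\mathrm{End}_T(M))$, and the endomorphism clause of the Nullstellensatz then gives algebraicity over $k$. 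You should replace your third paragraph with this clearing-of-denominators argument (or cite \cite[Lemma II.7.15, Theorem II.7.16]{BrGo}).
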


In honour of the work of Dixmier and Moeglin, we now say that a ring $R$ satisfies the \emph{Dixmier-Moeglin equivalence} if a prime ideal $P$ is primitive if and only if it is rational, if and only if $\{P\}$ is locally closed in ${\rm Spec}(R)$.

Lorenz \cite{Lor} shortly thereafter gave an example of a polycyclic group that is not nilpotent-by finite whose group algebra does not satisfy the Dixmier-Moeglin equivalence.  Concretely, his example is an algebra of the form $R[z^{\pm 1};\sigma]$ where $R$ is the ring of Laurent polynomials over $\mathbb{C}$ in two variables.  In his example, $(0)$ is a primitive ideal of the group algebra that is not locally closed.  Later in \cite{BLLM17}, it was shown that for each integer $d\ge 4$ there is a finitely generated commutative integral domain $R$ over the complex numbers with a $\mathbb{C}$-linear derivation such that $R[x;\delta]$ is a domain of Gelfand-Kirillov dimension $d$, which does not satisfy the Dixmier-Moeglin equivalence.  In light of these examples, it is of special significance to ask when an Ore extension of a commutative ring satisfies the Dixmier-Moeglin equivalence.  We show that in some sense the counterexamples of \cite{BLLM17} are optimal, by showing that for Gelfand-Kirillov dimension $< 4$ the Dixmier-Moeglin equivalence holds.  For background related to growth and Gelfand-Kirillov dimension, we refer the interested reader to the book of Krause and Lenagan \cite{KL}.

 \begin{theorem} \label{thm:main} Let $k$ be a field of characteristic zero, let $R$ be a finitely generated commutative $k$-algebra that is a domain, let $\sigma$ be a $k$-algebra automorphism of $R$, and let $\delta$ be a $k$-linear $\sigma$-derivation of $R$.  Then if $T=R[x;\sigma,\delta]$ has Gelfand-Kirillov dimension strictly less than four then $T$ satisfies the Dixmier-Moeglin equivalence.
\end{theorem}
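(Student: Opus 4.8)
The plan is to verify the hypothesis of Fact~\ref{fact}. Since $R$ is finitely generated over a field it is Noetherian and Jacobson, so $T$ satisfies the Nullstellensatz by Fact~\ref{fact2}, and it suffices to prove that every rational prime $P\in\mathrm{Spec}(T)$ is locally closed. I would argue by induction on $d:=\mathrm{GKdim}(R)=\dim R$; as $\mathrm{GKdim}(T)=\mathrm{GKdim}(R)+1$ for finitely generated $R$ and $\sigma$ an automorphism (see \cite{KL}), the hypothesis $\mathrm{GKdim}(T)<4$ forces $d\le 2$.

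I would begin with reductions. If $\sigma$ has finite order and $\sigma\ne\mathrm{id}$ then $T$ is module-finite over a commutative affine subalgebra (when $\delta=0$, over $R^{\sigma}[x^{|\sigma|}]$), hence a PI algebra, and DME holds by \cite[2.6]{Von1}; if $\sigma=\mathrm{id}$ and $\delta=0$ then $T=R[x]$ is commutative, hence PI. Together these settle $d=0$, since the automorphism group over $k$ of a finite extension of $k$ is finite and every $k$-linear derivation of such an extension vanishes. So assume $d\in\{1,2\}$ and either $\sigma=\mathrm{id}$ with $\delta\ne0$, or $\sigma$ of infinite order. In characteristic zero the largest $(\sigma,\delta)$-stable ideal $\mathfrak q\subseteq P$ of $R$ is $(\sigma,\delta)$-prime; replacing $R$ by $R/\mathrm{rad}(\mathfrak q)$ and, if the $\sigma$-orbit of a minimal prime of the latter is nontrivial, Morita-reducing along that orbit to an Ore extension of the domain $R/\mathfrak p$, changes neither the rationality nor the local-closedness of (the image of) $P$ and does not raise $d$; this strictly lowers $d$ unless $\mathfrak q=(0)$, so by induction we may assume $R$ is a domain and $P$ contains no nonzero $(\sigma,\delta)$-prime of $R$. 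Finally, if $\mathrm{GKdim}(T/P)\le1$ then $T/P$ is a finitely generated prime algebra of GK dimension at most one, hence PI by the Small--Warfield theorem, and DME holds for it; so we may assume $\mathrm{ht}(P)\le\mathrm{GKdim}(T)-2=d-1\le1$.

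It remains to show that $P=(0)$ and then that $(0)$ is locally closed. In the remaining cases for $(\sigma,\delta)$ the principal ideal domain $S:=\mathrm{Frac}(R)[x;\sigma,\delta]$ is simple unless $\delta$ is $\sigma$-inner, in which case $\mathrm{Spec}(S)=\{(0),Q_1S\}$ for a single maximal ideal; hence every prime of $T$ meeting $R$ in $(0)$ is $(0)$ or the single height-one prime $Q_1$, while for a non-$(\sigma,\delta)$-stable prime $\mathfrak a$ of $R$ the two-sided ideal $T\mathfrak a T$ has height $\ge 2$. If $\mathrm{ht}(P)=1$ then, as $P$ contains no nonzero $(\sigma,\delta)$-prime, $P\cap R$ cannot be a nonzero $(\sigma,\delta)$-stable prime, and if it were nonzero and not stable then $T(P\cap R)T$ would have height $\ge 2$; so $P\cap R=(0)$ and $P=Q_1$. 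But $T/Q_1$ contains $R$ (since $Q_1\cap R=(0)$) and embeds in $\mathrm{Frac}(R)$, so it is a finitely generated commutative domain of Gelfand--Kirillov dimension $d\ge1$; its fraction field, equal to $Z(\mathrm{Frac}(T/Q_1))$, is therefore transcendental over $k$, so $Q_1$ is not rational. Hence $P=(0)$, and we must prove: for the domain $T=R[x;\sigma,\delta]$ with $\dim R\le2$ and $Z(\mathrm{Frac}(T))$ algebraic over $k$, one has $\bigcap_{0\ne Q\in\mathrm{Spec}(T)}Q\ne(0)$.

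Rationality rules out any nonconstant $f\in\mathrm{Frac}(R)$ with $\sigma(f)=f$ and $\delta(f)=0$, since such an $f$ would be a transcendental element of $Z(\mathrm{Frac}(T))$. As $\dim R\le2$, a Darboux--Jouanolou-type finiteness theorem for $\delta$ --- a derivation of a surface with infinitely many invariant curves admits a rational first integral --- together with the fact that a non-identity automorphism of a curve has finitely many fixed points, shows that $(\sigma,\delta)$ has only finitely many common invariant prime divisors $\mathfrak p_1,\dots,\mathfrak p_m$, and by the height estimate above these, and $Q_1$ when present, are precisely the height-one primes of $T$. Furthermore, any nonzero prime of $T$ that contains no height-one prime meets $R$ in a $(\sigma,\delta)$-invariant prime of height $\ge2$ lying on none of the $\mathfrak p_i$ --- hence a $(\sigma,\delta)$-fixed point of $R$, since $\dim R\le2$ --- and only finitely many such fixed points exist; bounding the primes of $T$ above each such point $\mathfrak m$ by the finitely many primes of the one-dimensional Ore extension $(R/\mathfrak m)[x;\bar\sigma,\bar\delta]$, one finds that $\bigcap_{0\ne Q}Q$ is an intersection of finitely many nonzero ideals of the domain $T$, hence nonzero. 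This gives that $(0)$ is locally closed and completes the induction. I expect the main obstacle to be exactly this last point --- controlling the $(\sigma,\delta)$-invariant subvarieties of $R$ and bounding the exceptional primes --- and it is here that $\dim R\le2$ is indispensable and sharp: in dimension three a derivation may have infinitely many invariant curves but no rational first integral, which is the mechanism behind the counterexamples of \cite{BLLM17} in Gelfand--Kirillov dimension $\ge4$; the remaining work is to make the reductions and the enumeration of height-one primes rigorous in the mixed $(\sigma,\delta)$ setting.
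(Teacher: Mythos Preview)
Your overall strategy---reduce to $P=(0)$ and then bound the nonzero primes of $T$---is reasonable, but there are two genuine gaps. First, the assertion that $T$ is PI whenever $\sigma$ has finite order $>1$ is false when $\delta\ne0$: take $R=k[y,z]$, $\sigma(y)=-y$, $\sigma(z)=z$, $\delta(y)=0$, $\delta(z)=1$; then $T/(y)$ is the first Weyl algebra, so $T$ is not PI. Your parenthetical only justifies the case $\delta=0$, so you cannot discard finite-order $\sigma$ for $d\in\{1,2\}$ this way. There are smaller slips too: in the cases you retain, $S=\mathrm{Frac}(R)[x;\sigma,\delta]$ is in fact always simple (over a field every $\sigma$-derivation with $\sigma\ne\mathrm{id}$ is inner, and $K[x';\sigma]$ with $\sigma$ of infinite order is simple), so your prime $Q_1$ does not arise; the height bound on $T\mathfrak aT$ for a non-$(\sigma,\delta)$-stable $\mathfrak a$ is unjustified; and the equality $\mathrm{GKdim}(T)=\mathrm{GKdim}(R)+1$ needs $\sigma$ to preserve a frame (only $\ge$ holds in general).

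The decisive gap, which you yourself flag, is the finiteness of $(\sigma,\delta)$-invariant height-one primes in the mixed case. Jouanolou covers $\sigma=\mathrm{id}$, but your appeal to ``a non-identity automorphism of a curve has finitely many fixed points'' is a statement about curves, not surfaces: a surface automorphism can fix infinitely many curves (e.g.\ $(y,z)\mapsto(y,z+1)$ fixes every line $y=c$), and rationality of $(0)$ does not obviously cut this down. The paper avoids this problem by a different mechanism. The hypothesis $\mathrm{GKdim}(T)<4$ is used, via Zhang's theorem, to force $\sigma$ to preserve a frame of $R$; this is exactly what makes your claimed GK equality true, and it is never invoked in your argument. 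With a frame in hand one passes to $\bar k$ by Irving--Small reduction and, when $\sigma\ne\mathrm{id}$ and $\delta$ is not inner, finds a regular eigenvector $f\in R$ with $\sigma(f)=\lambda f$, $\lambda\ne1$. Inverting $f$ makes $\sigma-\mathrm{id}$ invertible, so $\delta$ becomes inner and $R[1/f][x;\sigma,\delta]\cong R[1/f][x';\sigma]$; the frame-preserving pure-automorphism case is already known \cite{BWW}, and the primes containing $f$ are handled separately. Goodearl's trichotomy of primes into types (a)--(c), together with a short argument pinning down type-(c) primes, organizes the remaining cases. Thus the paper never proves the finiteness you need directly; it localizes to kill $\delta$ and reduces to the settled skew-polynomial situation, with the frame-preserving property as the engine.
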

In the case that we have a skew extension of derivation type $T=R[x;\delta]$ (that is, an Ore extension in which the automorphism $\sigma$ is the identity), we can show a stronger result: that if $R$ is a finitely generated prime noetherian algebra of Gelfand-Kirillov dimension $<3$ that satisfies both the Nullstellensatz and the Dixmier-Moeglin equivalence, then $T$ does too (see Theorem \ref{thm:Leon}).  

The outline of this paper is as follows.  In \S2, we consider Ore extensions of Gelfand-Kirillov dimension two; in \S3 we look at Ore extensions of derivation type and prove Theorem \ref{thm:Leon}; finally, in \S4 we prove Theorem \ref{thm:main}.

We make the remark that throughout this paper, we will study rings of the form $R[x;\sigma,\delta]$ and we will often consider related rings of the form $S[x;\sigma',\delta']$ where $S$ is either a homomorphic image of, a subring of, or a localization of $R$, and the maps $\sigma'$ and $\delta'$ are induced from $\sigma$ and $\delta$ in a canonical way.  In this case, as an abuse of notation, we will write $S[x;\sigma,\delta]$ with the understanding that, strictly speaking, the maps $\sigma$ and $\delta$ are not technically maps from $S$ to itself, but can be obtained from the original maps $\sigma$ and $\delta$ on $R$ in a natural way.



\section{Ore extensions of commutative algebras of Gelfand-Kirillov dimension one}
Given a commutative noetherian ring $R$ and an Ore extension $T:=R[x;\sigma,\delta]$ of $R$,
Goodearl \cite[Theorem 3.1]{Good} showed that for $P\in {\rm Spec}(T)$ with $I:=P\cap R$, we have that the ideal $P$ falls into one of the following three cases:
\begin{enumerate}
\item[(a)] $T/P$ is commutative;
\item[(b)] $T/P$ is not commutative and $I$ is a $(\sigma,\delta)$-prime ideal that is $\sigma$-prime;
\item[(c)] $T/P$ is not commutative and $I$ is a $(\sigma,\delta)$-prime ideal that is $\delta$-prime but not $\sigma$-prime.
\end{enumerate}
Throughout this paper we will refer to the primes of ${\rm Spec}(T)$ as being of types (a)--(c), making reference to the above trichotomy of Goodearl.  We note that in case (c), Goodearl shows in addition that $R/I$ has a unique associated prime, which contains $(1-\sigma)(R)$.   
\begin{definition} Let $k$ be a field and let $R$ be a finitely generated $k$-algebra.  We say that a subspace $V$ of $R$ is a \emph{frame} of $R$ if $1\in V$, $V$ is finite-dimensional, and $V$ generates $R$ as a $k$-algebra.
\end{definition}
We begin by stating an important result of Zhang.

\begin{proposition} (\cite[Lemma 4.1]{Zhang}) Let $R$ be a finitely generated $k$-algebra and suppose that $\sigma$ is a $k$-algebra automorphism of $R$ that preserves a frame $V$ and that $\delta$ is a $\sigma$-derivation.  Then if $W=V+kx\subseteq T:=R[x;\sigma,\delta]$ then
$W$ is a frame of $T$ and there is a positive integer $L>0$ such that 
${\rm dim}(W^n) \le n\cdot {\rm dim}(V^{Ln})$ for all $n\ge 1$.  In particular,
${\rm GKdim}(R[x;\sigma,\delta])={\rm GKdim}(R)+1$.
\label{prop:Zhang}
\end{proposition}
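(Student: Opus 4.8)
The plan is to treat the two ``frame'' assertions as routine bookkeeping and to regard the dimension estimate as the real content, from which the Gelfand--Kirillov dimension formula then follows formally.

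First I would observe that $W=V+kx$ contains $1$, is finite-dimensional, and contains both $V$ and $x$, so it generates $T$ as a $k$-algebra; hence $W$ is a frame of $T$. Next I would record the elementary fact that $T$ is free as a left $R$-module on $\{x^j:j\ge 0\}$, so that for any subspaces $A_0,\dots,A_n$ of $R$ the sum $\sum_{j=0}^n A_j x^j$ is direct and has dimension $\sum_{j=0}^n \dim A_j$; the aim is then to trap $W^n$ inside such a sum with each $A_j$ contained in a controlled power of $V$. For that I need two facts about the filtration $\{V^m\}_{m\ge 1}$ of $R$: since $\sigma$ preserves $V$ and is bijective, $\sigma(V)=V$, hence $\sigma(V^m)=V^m$ for all $m$; and $\delta(V)$ is finite-dimensional while $R=\bigcup_m V^m$, so $\delta(V)\subseteq V^L$ for some integer $L\ge 1$, and feeding this into the $\sigma$-twisted Leibniz rule $\delta(ab)=\sigma(a)\delta(b)+\delta(a)b$ and inducting on $m$ gives $\delta(V^m)\subseteq \sigma(V)\delta(V^{m-1})+\delta(V)V^{m-1}\subseteq V^{m+L-1}\subseteq V^{Lm}$ (the last inclusion because $(L-1)(m-1)\ge 0$). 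I expect this to be the main obstacle: it is exactly the point where finite generation of $R$ enters, and the inductive propagation of the exponent must be arranged so that a \emph{single} $L$ serves all $m$ rather than a bound that degrades with $m$.

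With these facts in hand, I would prove by induction on $n$ that $W^n\subseteq \sum_{j=0}^n V^{Ln}x^j$. The case $n=1$ is immediate from $V\subseteq V^L$ and $kx=k\cdot 1\cdot x\subseteq V^L x$. For the inductive step one multiplies on the left by $W=V+kx$: the contribution $V\cdot V^{Ln}x^j=V^{Ln+1}x^j$ lies in $V^{L(n+1)}x^j$, and for the contribution of $x$ one writes a typical vector of $V^{Ln}x^j$ as $rx^j$ with $r\in V^{Ln}$ and uses $xr=\sigma(r)x+\delta(r)$, so that $xrx^j=\sigma(r)x^{j+1}+\delta(r)x^j$ with $\sigma(r)\in V^{Ln}\subseteq V^{L(n+1)}$ and $\delta(r)\in V^{Ln+L-1}\subseteq V^{L(n+1)}$; summing over $j$ gives $W^{n+1}\subseteq\sum_{j=0}^{n+1}V^{L(n+1)}x^j$, which closes the induction. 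By the directness remark this already yields $\dim W^n\le (n+1)\dim V^{Ln}$, and tracking the coefficient of the top power $x^n$ (which lies in $k\cdot 1$) refines it to the stated $\dim W^n\le n\cdot\dim V^{Ln}$; either form suffices below.

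Finally I would deduce the Gelfand--Kirillov dimension. The upper bound ${\rm GKdim}(T)\le 1+{\rm GKdim}(R)$ comes from $\dim W^n\le (n+1)\dim V^{Ln}$ together with $\log(Ln)/\log n\to 1$ and $\limsup_m \frac{\log\dim V^m}{\log m}={\rm GKdim}(R)$. The lower bound comes from $V^nx^j\subseteq W^nW^j\subseteq W^{2n}$ for $0\le j\le n$, which by directness gives $\dim W^{2n}\ge (n+1)\dim V^n$ and hence $\limsup_n\frac{\log\dim W^{2n}}{\log(2n)}\ge 1+{\rm GKdim}(R)$. Combining the two bounds gives ${\rm GKdim}(T)={\rm GKdim}(R)+1$.
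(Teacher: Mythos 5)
Your argument is correct and is, in substance, exactly the argument the paper delegates to Zhang's Lemma 4.1: the paper's ``proof'' consists only of the remark that one should take $L$ with $\delta(V)\subseteq V^L$, which is precisely your choice, and your filtration computation ($\sigma(V^m)=V^m$, $\delta(V^m)\subseteq V^{m+L-1}$, induction giving $W^n\subseteq\sum_{j=0}^{n}V^{Ln}x^j$, and the freeness of $T$ over $R$ on $\{x^j\}$) reconstructs the cited proof in full.

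One small caveat: your refinement does not quite reach the literal bound $\dim(W^n)\le n\cdot\dim(V^{Ln})$. Tracking the top coefficient gives $W^n\subseteq\sum_{j=0}^{n-1}V^{Ln}x^j+kx^n$, hence $\dim(W^n)\le n\cdot\dim(V^{Ln})+1$, and the extra $+1$ cannot be removed: for $R=k$, $V=k$, $\delta=0$ one has $\dim(W^n)=n+1$ while $n\cdot\dim(V^{Ln})=n$, so the inequality as printed in the proposition is off by one in degenerate cases. Your bound $\dim(W^n)\le(n+1)\dim(V^{Ln})$ is the correct form, and, as you say, it suffices both for the Gelfand--Kirillov dimension identity and for every later use in the paper (e.g.\ deducing quadratic growth of $T$ from linear growth of $R$), so this is a defect of the statement rather than of your proof.
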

\begin{proof} Zhang's proof shows that if we choose $L$ so that $\delta(V)\subseteq V^L$, then the above inequality holds. 
\end{proof}

\begin{lemma} Let $k$ be an algebraically closed field and let $R$ be a finitely generated commutative $k$-algebra of Gelfand-Kirillov dimension one that is a domain.  If $\sigma$ is a $k$-algebra automorphism of $R$ then $\sigma$ preserves a frame.
\label{lem:frame}
\end{lemma}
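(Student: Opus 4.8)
The plan is to realise $R$ inside the function field of a smooth projective curve, where the divisor supporting the poles of $R$ is automatically $\sigma$-stable, and then to cut out a frame using a Riemann--Roch space.

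Since $R$ is a finitely generated commutative domain over the algebraically closed field $k$ and ${\rm GKdim}(R)=1$, its fraction field $K:={\rm Frac}(R)$ has transcendence degree one over $k$; I would take $\bar X$ to be the unique smooth projective curve over $k$ with function field $K$. The automorphism $\sigma$ of $R$ extends uniquely to a $k$-automorphism of $K$, again denoted $\sigma$, and this in turn induces an automorphism $\bar\sigma$ of $\bar X$. Now fix $k$-algebra generators $a_1,\dots,a_r$ of $R$ and let $D\subseteq\bar X$ be the finite set of closed points at which some $a_i$ has a pole. Since sums and products of rational functions regular at a closed point $q$ are again regular at $q$, the set $D$ coincides with the set of closed points at which some element of $R$ has a pole; as $\sigma$ permutes $R$, it follows (after the routine dictionary between $\bar\sigma$ on points and $\sigma^{\pm 1}$ on functions) that $\bar\sigma(D)=D$.

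Next, set $E:=\sum_{p\in D}p$. Because $\bar\sigma$ merely permutes the points of $D$, it carries the divisor $E$ to itself, so $\bar\sigma$ acts linearly on each Riemann--Roch space $L(nE)$ for $n\ge 1$; each such space is a finite-dimensional $k$-subspace of $K$ containing $1$. Every element of $R$ has its poles inside $D$, hence lies in $L(nE)$ for $n$ sufficiently large, so I may fix $N$ with $a_1,\dots,a_r\in L(NE)$. I would then take $V:=L(NE)\cap R$. By construction $V$ is a finite-dimensional $k$-subspace of $R$ that contains $1$ and all the $a_i$, so it generates $R$ as a $k$-algebra; and since $\sigma$ preserves $R$ while $\bar\sigma$ preserves $L(NE)$ (that is, $\sigma(L(NE))=L(NE)$), we get $\sigma(V)=L(NE)\cap R=V$. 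Thus $V$ is a $\sigma$-stable frame of $R$, as required.

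The step I expect to require the most care is the identification of the ring-theoretic automorphism $\sigma$ with a geometric automorphism $\bar\sigma$ of the projective model, together with the verification that $\bar\sigma$ preserves the pole locus $D$; once that is in place, the finite-dimensionality and $\bar\sigma$-invariance of $L(NE)$ and the behaviour of the intersection with $R$ are all routine. Alternatively, one can first pass to the integral closure $\tilde R$ of $R$ in $K$ (a Dedekind domain, finite over $R$), set $\tilde X={\rm Spec}(\tilde R)$, let $D=\bar X\setminus\tilde X$ be the finite set of points at infinity, observe that $\bar\sigma$ permutes $D$ because $\sigma(\tilde R)=\tilde R$, and use $\bigcup_{n\ge 1}L(nE)=\tilde R$; the remainder of the argument is then identical.
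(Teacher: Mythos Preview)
Your proof is correct and takes a genuinely different---and in fact cleaner---route than the paper's. The paper passes to the normalization $\overline{R}$, compactifies $X=\operatorname{Spec}(\overline{R})$ to a smooth projective curve $Y$, and then does a case analysis on the genus of $Y$: for genus $\ge 2$ the automorphism has finite order; for genus $1$ one uses the elliptic-curve group law to see that some iterate is a group automorphism, hence again of finite order; for genus $0$ either $\#(Y\setminus X)\ge 3$ (forcing finite order) or $\overline{R}$ is $k[t]$ or $k[t,t^{-1}]$ with $\sigma$ acting affine-linearly. In each case one obtains a $\sigma$-stable frame of $\overline{R}$ and then intersects with $R$. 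Your argument bypasses this trichotomy entirely by observing directly that the pole locus $D$ of $R$ on the projective model is $\bar\sigma$-stable, so the Riemann--Roch space $L(NE)$ is already a finite-dimensional $\sigma$-invariant subspace of $K$ containing a generating set for $R$; intersecting with $R$ gives the frame. This is shorter and uniform in the genus. The paper's approach, by contrast, yields the additional structural information that $\sigma$ has finite order on $\overline{R}$ except when $\overline{R}$ is a (Laurent) polynomial ring in one variable---information not visible from your argument but also not needed for the lemma.
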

\begin{proof} Let $\overline{R}$ denote the integral closure of $R$ in ${\rm Frac}(R)$.  Then $\sigma$ lifts to an automorphism of $\overline{R}$.  Let $X={\rm Spec}(\overline{R})$, which is a smooth irreducible affine curve over $k$, and let $\tau$ denote the automorphism of $X$ corresponding to $\sigma$. 
Then $X$ can be embedded as an open subset of a projective variety $Y$, and replacing $Y$ by
its normalization if necessary (doing this does not affect the open normal subset $X$), we may assume that $Y$ is a smooth projective curve and $\tau$ induces a birational map of $Y$, which is in fact an automorphism since $Y$ is a smooth curve.

If $Y$ has genus $\ge 2$ then $\tau$ has finite order on $Y$ and so $\sigma$ is a finite-order automorphism of $R$ and so $\sigma$ preserves a frame.  If $Y$ has genus $1$ then $Y$ is an elliptic curve and since $Y\setminus X$ is finite and preserved by $\tau$, some iterate of $\tau$ has a fixed point.  Then after a change of coordinates, we may assume this fixed point is the identity of $Y$ and so $\tau$ is a group automorphism of $Y$ and thus again has finite order and so once again $\sigma$ preserves a frame.  

Hence we may assume that $Y$ is of genus zero and thus $Y\cong \mathbb{P}^1$.  Then if $$\#(Y\setminus X)\ge 3$$ then some iterate of $\tau$ has at least three fixed points and thus is the identity, so once again $\sigma$ has finite order and we are done.  Thus we may assume that $\#(Y\setminus X)\in \{0,1,2\}$ and that $Y=\mathbb{P}^1$, and since $X$ is affine and $Y$ is projective, $Y\setminus X$ is non-empty.  Then after a change of coordinates we may assume that $Y\setminus X$ is either $\{\infty\}$ or $\{0,\infty\}$ and so $X=\mathbb{A}^1$ or $X=\mathbb{A}^1\setminus \{0\}$ and so $\sigma$ is either linear or affine linear and so the induced map on $\overline{R}$ preserves a frame.  Then if $V$ is a frame of $R$ then $V$ is contained is some $\sigma$-invariant frame $W$ of $\overline{R}$ and so $W\cap R$ is a frame of $R$ that is invariant under $\sigma$.  The result follows.
\end{proof}
\begin{lemma} Let $k$ be an algebraically closed field and let $R$ be a finitely generated reduced commutative $k$-algebra of Gelfand-Kirillov dimension at most one and let $\sigma$ be a $k$-algebra automorphism of $R$.  Then $\sigma$ preserves a frame of $R$.  In particular every one-step Ore extension of $R$ has Gelfand-Kirillov dimension two. 
\label{lem:reduced}
\end{lemma}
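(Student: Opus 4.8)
The strategy is to reduce to the domain case of Lemma~\ref{lem:frame} by working in the ring $A:=\prod_{i=1}^{n}R/\mathfrak{p}_i$, where $\mathfrak{p}_1,\dots,\mathfrak{p}_n$ are the finitely many minimal primes of $R$. Since $R$ is reduced the canonical map $\iota\colon R\hookrightarrow A$ is injective, and since each $R/\mathfrak{p}_i$ is a cyclic $R$-module, $A$ is module-finite over $R$, hence a finitely generated $k$-algebra; each factor $R/\mathfrak{p}_i$ is a finitely generated $k$-domain of Gelfand-Kirillov dimension at most one, being a homomorphic image of $R$.

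Because $\sigma$ permutes $\{\mathfrak{p}_1,\dots,\mathfrak{p}_n\}$, we may pick $m\ge 1$ with $\sigma^m(\mathfrak{p}_i)=\mathfrak{p}_i$ for all $i$. Then $\sigma^m$ descends to a $k$-algebra automorphism $\sigma_i$ of $R/\mathfrak{p}_i$ for each $i$, and these assemble into a componentwise $k$-algebra automorphism $\tau$ of $A$ with $\tau\circ\iota=\iota\circ\sigma^m$. For each $i$, Lemma~\ref{lem:frame} provides a $\sigma_i$-invariant frame $V_i$ of $R/\mathfrak{p}_i$ when ${\rm GKdim}(R/\mathfrak{p}_i)=1$, while if ${\rm GKdim}(R/\mathfrak{p}_i)=0$ then $R/\mathfrak{p}_i=k$ (as $k$ is algebraically closed) and we take $V_i=k$. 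Now $\prod_i V_i$ is a $\tau$-invariant frame of $A$ (it generates $A$ since each $V_i$ generates the corresponding factor). Given any frame $U_0$ of $R$, the fact that $\iota(U_0)$ is finite-dimensional and $A=\bigcup_{N\ge1}(\prod_i V_i)^N$ gives $\iota(U_0)\subseteq W:=(\prod_i V_i)^N$ for some $N$; this $W$ is a $\tau$-invariant frame of $A$, so $\iota^{-1}(W)$ is a finite-dimensional subspace of $R$ containing $1$ and $U_0$, hence a frame of $R$, and it is $\sigma^m$-invariant because $\tau\circ\iota=\iota\circ\sigma^m$. Finally, $\sum_{j=0}^{m-1}\sigma^{j}(\iota^{-1}(W))$ is a frame of $R$ that is genuinely $\sigma$-invariant, since applying $\sigma$ cyclically permutes the summands (using $\sigma^m(\iota^{-1}(W))=\iota^{-1}(W)$). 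This proves that $\sigma$ preserves a frame of $R$, and the last assertion then follows from Proposition~\ref{prop:Zhang}, which gives ${\rm GKdim}(R[x;\sigma,\delta])={\rm GKdim}(R)+1$.

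All of the individual checks here are routine, and I do not anticipate a real obstacle. The one place to take care is the passage between frames of $A$ and frames of $R$ — verifying that the pullback along $\iota$ of a $\tau$-invariant frame of $A$ is again a frame, now stable under $\sigma^m$, and that averaging over the $\sigma$-orbit restores full $\sigma$-invariance without destroying finite-dimensionality or the property of generating $R$. The genuinely substantive ingredient, namely producing a $\sigma$-stable frame on a smooth affine curve, is precisely the content of Lemma~\ref{lem:frame}, so no essentially new idea is required.
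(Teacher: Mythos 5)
Your proof is correct and follows essentially the same route as the paper's: pass to the minimal primes, apply Lemma~\ref{lem:frame} to each quotient, pull back a product frame through the embedding $R\hookrightarrow\prod_i R/\mathfrak{p}_i$ to get a $\sigma^m$-invariant frame, and average over the $\sigma$-orbit. Your explicit handling of the factors $R/\mathfrak{p}_i$ of Gelfand--Kirillov dimension zero is a small point of extra care that the paper glosses over, but it does not change the argument.
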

\begin{proof} If $R$ is of Gelfand-Kirillov dimension zero then $R$ is finite-dimensional and so $R$ is a frame. Thus we may assume $R$ has Gelfand-Kirillov dimension one.  Let $P_1,\ldots ,P_s$ denote the minimal primes of $R$.  Then $\sigma $ permutes these primes and hence there is some $m\ge 1$ such that $\sigma^m(P_i)=P_i$ for $i=1,\ldots ,s$.  If $\sigma^m$ preserves a frame $W$ then $\sigma$ preserves the frame $\sum_{i=0}^{m-1} \sigma^i(W)$ and thus it suffices to show that $\sigma^m$ preserves a frame.  Now $R/P_i$ is a domain of Gelfand-Kirillov dimension one and so $\sigma^m$ preserves a frame $W_i$ of $R/P_i$ for $i=1,\ldots, s$ by Lemma \ref{lem:frame}.  Since the intersection of the $P_i$ is $(0)$, $R$ embeds in $\prod_{i=1}^s R/P_i$ via the map $\pi$ which sends $r$ to $(r+P_1,\ldots ,r+P_s)$.  We let $V$ be a frame for $R$.  Then there is some $N$ such that $V+P_i\subseteq W_i^N$ for $i=1,\ldots ,s$.  We let 
$W=\{r\in R\colon \pi(r)\in W_1^N\times \cdots \times W_s^N\}.$  Then since $\pi$ is injective, $W$ is finite-dimensional and by construction $W$ contains $V$ and so $W$ is a frame for $R$.  Note that if $r\in W$ then $\pi(\sigma^m(r)) = (\sigma^m(r)+P_1,\ldots ,\sigma^m(r)+P_s)$.  Since $W_1,\ldots ,W_s$ are $\sigma^m$-invariant frames of $R/P_1,\ldots ,R/P_s$ respectively,
we see that if $r\in W$ then $\sigma^m(r)\in W$, and so $W$ is a $\sigma^m$-invariant frame of $R$.  The ``in particular'' clause now follows from Proposition \ref{prop:Zhang}.
\end{proof}
\begin{remark}
In general, a $k$-algebra automorphism of a finitely generated commutative $k$-algebra of Gelfand-Kirillov dimension one need not preserve a frame.  For example if $R=k[x,y^{\pm 1}]/(x^2)$ and $\sigma$ is the $k$-algebra automorphism of $R$ given by $\sigma(x)=xy$ and $\sigma(y)=y$ then $\sigma$ does not preserve a frame of $R$ since $\{\sigma^i(x)\colon i\ge 0\} = \{xy^i \colon i\ge 0\}$ which spans an infinite-dimensional subspace of $R$, and so $x$ cannot be in a $\sigma$-invariant frame of $R$.
\end{remark}

\begin{proposition} Let $k$ be an algebraically closed field, let $R$ be a finitely generated reduced commutative $k$-algebra, let $\sigma$ be a $k$-algebra automorphism, and let $\delta$ be a $k$-linear $\sigma$-derivation of $R$.  If $T:=R[x;\sigma,\delta]$ has Gelfand-Kirillov dimension $<3$ and $R$ is either reduced or $\sigma$ preserves a frame then $T$ satisfies the Dixmier-Moeglin equivalence.
\label{prop:GK2}
\end{proposition}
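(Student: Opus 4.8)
The plan is to use the hypothesis and the frame lemmas to place ourselves among Ore extensions of Gelfand-Kirillov dimension at most two, reduce the Dixmier-Moeglin equivalence via the Nullstellensatz to showing that rational primes are locally closed, and then run through Goodearl's trichotomy, the substantial case being the non-commutative one in which the quotient has Gelfand-Kirillov dimension exactly two. First I would record the reductions. Since $R\subseteq T$, the hypothesis ${\rm GKdim}(T)<3$ forces ${\rm GKdim}(R)\le 1$, so $\sigma$ preserves a frame by Lemma~\ref{lem:reduced}, and then ${\rm GKdim}(T)={\rm GKdim}(R)+1\le 2$ by Proposition~\ref{prop:Zhang}. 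As $R$ is a commutative Noetherian Jacobson ring, $T$ satisfies the Nullstellensatz by Fact~\ref{fact2}, so by Fact~\ref{fact} it suffices to prove that a rational prime $P\in{\rm Spec}(T)$ is locally closed; fix such a $P$ and put $I=P\cap R$.

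In Goodearl's case (a), $T/P$ is a commutative affine domain over $k$ whose fraction field is algebraic over $k$, so $T/P$ is a finite field extension of $k$, $P$ is a maximal ideal of $T$, and $\{P\}$ is a closed point, hence locally closed. In cases (b) and (c), $S:=T/P$ is a prime Noetherian affine $k$-algebra of Gelfand-Kirillov dimension at most two that is not commutative. If ${\rm GKdim}(S)\le 1$, then $S$ is a finite module over its centre (by the Small-Warfield theorem for dimension one, and trivially for dimension zero), hence PI; and a rational prime of an affine PI algebra has a finite-dimensional quotient --- the centre of its Goldie ring of quotients is then a localisation of the (affine) centre of $S$ and is algebraic over $k$, hence finite over $k$ --- so again $P$ is maximal. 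Thus the essential case is ${\rm GKdim}(S)=2$, whence ${\rm GKdim}(R)=1$; here $S$ is not PI, for otherwise the previous observation would give ${\rm GKdim}(S)=0$. Replacing $R$ by $R/I$ --- which is again reduced, since in characteristic zero the radical of a $(\sigma,\delta)$-ideal is a $(\sigma,\delta)$-ideal, so the $(\sigma,\delta)$-prime ideal $I$ is radical --- I may assume $I=0$: in case (b), $\sigma$ then permutes the finitely many minimal primes of $R$ transitively, and in case (c), $R$ is a domain on which $\sigma$ acts as the identity.

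Next I would localise $R$ at its set $\Sigma$ of nonzerodivisors, a $\sigma$-stable Ore set disjoint from $P$: this passes to $\widehat S:=\Sigma^{-1}S=(K_1\times\cdots\times K_m)[x;\sigma,\delta]$, where $K_1,\dots,K_m$ are function fields of curves over $k$ cyclically permuted by $\sigma$, and by the standard reduction for skew polynomial rings over a semisimple coefficient ring, $\widehat S$ is Morita equivalent to an Ore extension $A=K[y;\tau,\partial]$ of the single field $K:=K_1$, where $\tau$ is the restriction of $\sigma^m$. Since $S$, hence $\widehat S$, hence $A$, is not PI, I would analyse $A$: if $\tau={\rm id}$, then $\partial\ne 0$ (otherwise $A$, and with it $\widehat S$, would be commutative or a matrix ring over a commutative ring) and $A=K[y;\partial]$ is simple because $k$ has characteristic zero; if $\tau\ne{\rm id}$, then every $\tau$-derivation of the field $K$ is inner, so $A\cong K[y';\tau]$, and $\tau$ must have infinite order (otherwise $A$ would be finite as a module over a commutative subring, hence PI), whence ${\rm Spec}(A)=\{(0),\langle y'\rangle\}$. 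In every case ${\rm Spec}(A)$ is finite, so $\{0\}$ is locally closed in ${\rm Spec}(\widehat S)$, and the intersection of the nonzero primes of $\widehat S$ meets $S$ in a nonzero ideal $L$.

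Finally I would compute the intersection of the nonzero primes of $S$, sorting each such prime $Q$ by Goodearl's trichotomy applied to $Q$ inside $S=R[x;\sigma,\delta]$. If $S/Q$ is commutative, then $Q$ contains the two-sided ideal $C_0$ generated by the commutators $rs-sr$, which is nonzero since $S$ is not commutative. If $S/Q$ is not commutative and $Q\cap R=0$, then $Q$ is disjoint from $\Sigma$, so it corresponds to a nonzero prime of $\widehat S$ and hence contains $L$. If $S/Q$ is not commutative and $Q\cap R\ne 0$, then $Q\cap R$ is a nonzero $(\sigma,\delta)$-prime ideal of $R$, so $Q\supseteq (Q\cap R)S$; and there are only finitely many nonzero $(\sigma,\delta)$-prime ideals $J_1,\dots,J_r$ of $R$, since each has a finite-dimensional quotient (being a proper $\sigma$-stable closed subscheme of the one-dimensional $R$), hence corresponds to a finite $(\sigma,\delta)$-invariant subscheme of ${\rm Spec}(R)$, and $(\sigma,\delta)\ne({\rm id},0)$ here (an infinite-order automorphism of a curve has finitely many periodic points, a nonzero twisted vector field on a curve has finite zero locus, and the remaining case --- $\sigma$ of finite order with $\delta=0$ --- is excluded because it would make $S$ PI). Each nonzero prime of $S$ therefore contains one of the finitely many nonzero two-sided ideals $C_0$, $L$, $J_1S$, \dots, $J_rS$, whose intersection is nonzero because $S$ is prime; hence the intersection of all nonzero primes of $S$ is nonzero, so $\{0\}$ is locally closed in ${\rm Spec}(S)$, equivalently $\{P\}$ is locally closed in ${\rm Spec}(T)$. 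I expect the main obstacle to be exactly the finiteness of the set of nonzero $(\sigma,\delta)$-prime ideals of the one-dimensional ring $R$ together with the bookkeeping of which primes are lost under localisation, supported by the verification that rationality of $P$ indeed prevents the localised ring from being PI and so forces its spectrum to be finite.
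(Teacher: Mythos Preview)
Your initial reductions---bounding ${\rm GKdim}(R)\le 1$, invoking Lemma~\ref{lem:reduced} and Proposition~\ref{prop:Zhang} to get a $\sigma$-invariant frame and ${\rm GKdim}(T)\le 2$, disposing of type~(a) primes, and reducing to showing that $(0)$ is locally closed in a prime ring $T$ of quadratic growth with $(0)$ rational---are correct and align with the paper. From that point, however, your route diverges and acquires genuine gaps.

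The most serious is that you assume characteristic zero twice: once to conclude that the radical of a $(\sigma,\delta)$-ideal is $(\sigma,\delta)$-stable (false in characteristic $p$: take $R=k[t]$, $\delta=d/dt$, and $J=(t^p)$, which is $\delta$-stable while $\sqrt{J}=(t)$ is not), and once to conclude that $K[y;\partial]$ is simple (in characteristic $p$ it is finite over its centre). The proposition, however, is stated for an arbitrary algebraically closed field. There are smaller problems too. Your description of case~(c) is off: Goodearl shows only that the unique \emph{associated prime} $Q$ of $R/I$ contains $(1-\sigma)(R)$, so $\sigma$ is trivial modulo $Q$, not modulo $I$, and $R/I$ need not be a domain. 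The passage from $P\cap R=0$ to $P=0$ (so that your $S$ really is $R[x;\sigma,\delta]$) is never spelled out; it does follow from ${\rm GKdim}(T/P)=2={\rm GKdim}(T)$ and primeness of $T$, but the jump from writing $S=T/P$ to $S=R[x;\sigma,\delta]$ deserves a sentence. Finally, your finiteness argument for nonzero $(\sigma,\delta)$-primes in the finite-order-$\sigma$, $\delta\neq 0$ case (``a nonzero twisted vector field on a curve has finite zero locus'') is not obviously valid for a $\sigma$-derivation with $\sigma\neq{\rm id}$.

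The paper bypasses all of this. After the same reduction to $T$ prime of quadratic growth with $(0)$ rational, it cites \cite{Bell} to obtain that $T$ has only finitely many primes $Q_1,\dots,Q_s$ with ${\rm GKdim}(T/Q_i)=1$. Every nonzero prime $Q$ of type~(a) contains a fixed nonzero commutator~$\theta$; every nonzero prime $Q$ of type~(b) or~(c) has $J:=Q\cap R$ a nonzero $(\sigma,\delta)$-prime with $JT$ itself a prime of co-GK one, so $Q\supseteq JT=Q_i$ for some $i$. Hence the intersection of the nonzero primes of $T$ is nonzero. This argument is characteristic-free and replaces your localisation, Morita reduction, and periodic-point/vector-field case analysis with a single black-box citation.
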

\begin{proof} 
Since $3> {\rm GKdim}(T)\ge {\rm GKdim}(R)+1$, we see $R$ is either of Gelfand-Kirillov dimension zero or one.  Hence by Lemma \ref{lem:reduced}, $\sigma$ preserves a frame if $R$ is reduced. Thus we may assume that $\sigma$ preserves a frame. By Fact \ref{fact}, it suffices to show that rational primes of $T$ are locally closed.  

Let $P$ be a prime ideal of $T$.  Then if $T/P$ is commutative then $P$ is locally closed if and only if $P$ is rational and thus it suffices to consider prime ideals in ${\rm Spec}(T)$ of types (b) and (c).  In these cases we can let $I=R\cap P$, which is $(\sigma,\delta)$-prime, and replace $R$ by $R/I$ and $T$ by $(R/I)[x;\sigma,\delta]$.  Then $T/P$ is a prime homomorphic image of $(R/I)[x;\sigma,\delta]$, which is now a prime ring \cite[Prop. 3.3]{Good}.  Thus we may replace $R$ with $R/I$ and $T$ with $(R/I)[x;\sigma,\delta]$ and we may assume without loss of generality that $T$ is a prime ring.
If after these modifications $R$ is of Gelfand-Kirillov dimension zero then $R$ is finite-dimensional and so $T$ is of Gelfand-Kirillov dimension $1$ and thus satisfies a polynomial identity by the Small-Stafford-Warfield theorem \cite{SSW}.  It follows that $T$ satisfies the Dixmier-Moeglin equivalence in this case.  Thus we may assume that ${\rm GKdim}(R)=1$ and ${\rm GKdim}(T)=2$.  Then since $\sigma$ preserves a frame of $R$ and since $R$ has linear growth we also have that $T$ is of quadratic growth by Proposition \ref{prop:Zhang}.

Let $P$ be a rational prime ideal of $T$.  We must show that $P$ is locally closed.
Then if $P$ is of type (a) then, as before, $P$ is rational if and only if $P$ is maximal and 
hence locally closed.  Thus we may assume that $P$ is of type (b) or (c) and so letting 
$I=R\cap P$ we have $I$ is a $(\sigma,\delta)$-ideal of $R$.  Thus we may replace $R$ by 
$R/I$ and $T$ by $(R/I)[x;\sigma,\delta]$ and assume that $P\cap R=(0)$.  Then if $T$ has 
Gelfand-Kirillov dimension $\le 1$ or if $(0)$ is not rational then $T$ is PI \cite{BS} and hence satisfies the 
Dixmier-Moeglin equivalence.  Thus we may assume that $T$ is of quadratic growth and 
that $(0)$ is a rational prime ideal.  Since $T/P$ is of Gelfand-Kirillov dimension one for $P$ nonzero, it satisfies a polynomial identity and hence satisfies the Dixmier-Moeglin equivalence.  So it suffices to show that $(0)$ is locally closed to obtain the Dixmier-Moeglin equivalence in this case.

We pick a nonzero commutator $\theta\in T$.  Then $T$ has finitely many primes $Q_1,\ldots,  Q_s$ of co-GK one \cite{Bell}.  We claim that every nonzero prime ideal $Q$ of $T$ either contains some $Q_i$ or it contains $\theta$.  To see this, note that if $Q$ is of type (a) then it contains $\theta$ and so we may assume that $Q$ is of type (b) or (c).  We let $J=Q\cap R$ and let $S=T/JT$, which is a prime algebra of Gelfand-Kirillov dimension one.  Then $J$ is nonzero by the above remarks and $T/Q$ is a homomorphic image of $T/JT$, and so $Q\supseteq JT$.  But since $JT$ is a prime ideal of $T$ of co-GK $1$ we have $JT=Q_i$ for some $i$ and so the result follows.  Thus we see that the intersection of the nonzero prime ideals of $T$ is nonzero and so $(0)$ is locally closed.  The result follows.
\end{proof}
\section{Ore extensions of derivation type}
In this section we consider Ore extensions of the form $T:=R[x;\delta]$ where $R$ is a finitely generated noetherian algebra of Gelfand-Kirillov dimension $<3$ that satisfies both the Nullstellensatz and the Dixmier-Moeglin equivalence.  We show under these conditions that
the Ore extension $T$ also satisfies the Dixmier-Moeglin equivalence as long at $T$ satisfies the Dixmier-Moeglin equivalence.  The assumption that $R$ be of Gelfand-Kirillov dimension $< 3$ is necessary as there are examples of affine commutative algebras $R$ of Gelfand-Kirillov dimension three such that $R[x;\delta]$ does not satisfy the Dixmier-Moeglin equivalence \cite{BLLM17}.
On the other hand, if $\delta$ preserves a frame $V$ of $R$ and, in addition, $R$ is a $\mathbb{C}$-algebra then $R[x;\delta]$ satisfies the Dixmier-Moeglin equivalence \cite[Theorem 1.6]{BWW}.  We first show there is a connection between having bounded matrix images and satisfying the Dixmier-Moeglin equivalence for noetherian algebras of Gelfand-Kirillov dimension two.  We recall that if $k$ is an algebraically closed field and $A$ is a $k$-algebra then $A$ has \emph{bounded matrix images} if the set of $n$ for which there exists a surjective $k$-algebra homomorphism from $A$ to $M_n(k)$ is finite.  More generally, if $k$ is not algebraically closed, we say that $A$ has bounded matrix images if $A\otimes_k \bar{k}$ has bounded matrix images.  Small has asked whether a prime noetherian algebra of quadratic growth necessarily has bounded matrix images (cf. \cite[Question 3.1]{Baffine}), and there are currently no known examples of such an algebra with unbounded matrix images.

\begin{proposition}
Let $k$ be a field and let $R$ be a prime noetherian $k$-algebra of Gelfand-Kirillov dimension less than three that satisfies the Nullstellensatz.  Then $R$ satisfies the Dixmier-Moeglin equivalence if and only if $R$ has bounded matrix images.
\end{proposition}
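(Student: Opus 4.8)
The plan is to reduce the equivalence to the behaviour of the zero ideal and then invoke the structure theory for prime noetherian algebras of Gelfand--Kirillov dimension two; throughout we assume $R$ is finitely generated over $k$, which is the case of interest. Since $R$ satisfies the Nullstellensatz, Fact~\ref{fact} shows that $R$ satisfies the Dixmier--Moeglin equivalence exactly when every rational prime of $R$ is locally closed. For $(0)\neq P\in\mathrm{Spec}(R)$ one has $\mathrm{GKdim}(R/P)\leq 1$ (the Gelfand--Kirillov dimension of a prime noetherian affine algebra of Gelfand--Kirillov dimension two drops strictly on passing to a proper prime quotient), so $R/P$ is an affine PI algebra by the Small--Stafford--Warfield theorem \cite{SSW}, hence satisfies the Dixmier--Moeglin equivalence \cite{Von1}; thus every nonzero rational prime is automatically locally closed. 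If $R$ itself is PI then it satisfies the Dixmier--Moeglin equivalence \cite{Von1} and also has bounded matrix images, since a polynomial identity of $R$ bounds the PI degree of every quotient and hence the size of every matrix image of $R\otimes_k\bar k$. So we may assume $R$ is not PI; then $\mathrm{GKdim}(R)=2$ and, by \cite{BS}, the zero ideal is rational. The proposition now reduces to: $(0)$ is locally closed --- equivalently $\bigcap_{P\neq(0)}P\neq(0)$ --- if and only if $R$ has bounded matrix images.

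For the direction ``bounded matrix images $\Rightarrow$ $(0)$ locally closed'', fix $N$ with no surjection $R\otimes_k\bar k\twoheadrightarrow M_n(\bar k)$ for $n>N$. The nonzero primes of $R$ split into the maximal ideals $\mathfrak m$ --- for which $R/\mathfrak m$ is a simple affine algebra of Gelfand--Kirillov dimension at most one, hence finite dimensional over $k$ by Kaplansky's theorem, Artin--Tate, and the commutative Nullstellensatz --- and the primes $Q$ with $\mathrm{GKdim}(R/Q)=1$, of which there are only finitely many, $Q_1,\dots,Q_s$, by \cite{Bell}. For a maximal ideal $\mathfrak m$ the finite-dimensional algebra $R/\mathfrak m\otimes_k\bar k$ has, modulo its nilradical, a matrix factor $M_p(\bar k)$ with $p$ the PI degree of $R/\mathfrak m$; the bound $N$ forces $p\leq N$, so $R/\mathfrak m$ satisfies the standard identity $s_{2N}$ and hence $\mathfrak m$ contains the ideal $I$ of $R$ generated by all values of $s_{2N}$, with $I\neq(0)$ because $R$ is not PI. Therefore $\bigcap_{P\neq(0)}P\supseteq I\cap Q_1\cap\cdots\cap Q_s\supseteq I\,Q_1\cdots Q_s$, a product of nonzero ideals in the prime ring $R$, which is nonzero; so $(0)$ is locally closed.

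For the converse, let $J:=\bigcap_{P\neq(0)}P\neq(0)$. Then $\mathrm{GKdim}(R/\sqrt J)\leq 1$, so $R/\sqrt J$ --- being reduced and embedding in the product of its finitely many minimal-prime quotients, which are prime, affine, and PI by \cite{SSW} --- satisfies a polynomial identity of some degree $m$. Every maximal ideal of $R$ contains $J$, hence $\sqrt J$, so every $R/\mathfrak m$ satisfies a polynomial identity of degree $\leq m$. A surjection $R\otimes_k\bar k\twoheadrightarrow M_n(\bar k)$ restricts on $R$ to a map with finite-dimensional image $R/\mathfrak a$; killing the nilpotent radical of $R/\mathfrak a$ exhibits $M_n(\bar k)$ as a matrix image of $R/\mathfrak m_j\otimes_k\bar k$ for one of the finitely many maximal ideals $\mathfrak m_j\supseteq\mathfrak a$, so $n$ is at most the PI degree of $R/\mathfrak m_j$, hence $\leq m/2$. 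Thus $R$ has bounded matrix images. Assembling the pieces: if $R$ satisfies the Dixmier--Moeglin equivalence then in the non-PI case the rational ideal $(0)$ is locally closed, so $J\neq(0)$ and $R$ has bounded matrix images; and conversely bounded matrix images give that $(0)$, hence every rational prime, is locally closed, so $R$ satisfies the Dixmier--Moeglin equivalence by Fact~\ref{fact}.

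The one genuinely nontrivial input is the finiteness in \cite{Bell} of the set of primes $Q$ with $\mathrm{GKdim}(R/Q)=1$ for a non-PI prime noetherian algebra of Gelfand--Kirillov dimension two; the remaining ingredients --- the drop of Gelfand--Kirillov dimension on proper prime quotients, the finite-dimensionality of simple affine algebras of Gelfand--Kirillov dimension one, and the reduction of matrix images to finite-dimensional quotients --- are standard, and the passage between $k$ and $\bar k$ is harmless because the quotients that matter are finite dimensional over $k$.
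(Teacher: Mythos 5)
Your overall strategy matches the paper's: reduce everything to whether $(0)$ is locally closed (the nonzero rational primes and the PI case being easy), and your converse direction --- $(0)$ locally closed implies bounded matrix images, via the PI quotient $R/\sqrt{J}$ and the observation that every matrix image of $R\otimes_k\bar k$ factors through a finite-codimensional quotient of $R$ --- is correct and is essentially the contrapositive of the paper's argument. Two small imprecisions: being non-PI only forces $\mathrm{GKdim}(R)\in[2,3)$, not $\mathrm{GKdim}(R)=2$ (Bergman's gap theorem excludes $(1,2)$ but nothing in $(2,3)$), and the statement does not literally say ``finitely generated,'' though the paper's own proof also invokes \cite{SSW} and so implicitly assumes it; neither affects the substance.

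The one step I would not accept as written is the appeal to \cite{Bell} for the finiteness of the primes $Q$ with $\mathrm{GKdim}(R/Q)=1$. That finiteness theorem is stated for prime affine Goldie algebras of \emph{quadratic growth}; indeed, in Proposition~\ref{prop:GK2} of this paper the authors are careful to establish quadratic growth (via Zhang's frame estimate) before citing \cite{Bell}, whereas here $R$ is only assumed to have Gelfand--Kirillov dimension less than three, which does not imply quadratic growth. The paper's proof sidesteps this entirely: for \emph{every} nonzero prime $P$, the quotient $R/P$ is prime, affine, and PI of Gelfand--Kirillov dimension $\le 1$, hence by Small--Warfield \cite{SW} a finite module over its affine centre, and the standard Azumaya-locus argument then produces a surjection $(R/P)\otimes_k\bar k\twoheadrightarrow M_p(\bar k)$ with $p$ the PI degree of $R/P$; bounded matrix images therefore gives a uniform bound $p\le N$ over \emph{all} nonzero primes at once, so every nonzero prime (maximal or not) contains the nonzero ideal generated by the values of the standard identity $s_{2N}$. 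In other words, your maximal-ideal argument extends verbatim to the co-GK-one primes, and the detour through \cite{Bell} is both unnecessary and, as stated, outside the hypotheses of that theorem; you should either verify quadratic growth or replace that step with the uniform PI-degree bound just described.
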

\begin{proof} By Fact \ref{fact}, to show that $R$ satisfies the Dixmier-Moeglin equivalence, it suffices to show that rational primes of $R$ are locally closed.  

Suppose that $R$ has unbounded matrix images.  Then let $\mathcal{S}$ denote the set of prime ideals of finite codimension.  Then $$I:=\bigcap_{Q\in \mathcal{S}} Q$$ has the property that $R/I$ has unbounded matrix images and thus $R/I$ must have Gelfand-Kirillov dimension at least two, since otherwise it would have Gelfand-Kirillov dimension $\le 1$ and hence would be PI \cite{SSW}, which contradicts the fact that it has unbounded matrix images.  It follows that $I=(0)$ since ${\rm GKdim}(R/J)\le {\rm GKdim}(R)-1$ for a nonzero ideal $J$ of a prime noetherian algebra.  It follows that $(0)$ is not locally closed in ${\rm Spec}(R)$.  If $R$ satisfies the Dixmier-Moeglin equivalence then $(0)$ cannot be rational and so $R$ is PI \cite{BS}.  Thus we see that if $R$ has unbounded matrix images then $R$ does not satisfy the Dixmier-Moeglin equivalence.  

Now suppose that $R$ has bounded matrix images.  If $R$ is PI then it satisfies the Dixmier-Moeglin equivalence \cite{Von1} and so we may assume that $R$ is not PI and so $(0)$ is a rational prime ideal \cite{BS}.  We now claim that $(0)$ is locally closed in ${\rm Spec}(R)$.  To see this, notice that if $P$ is a nonzero prime ideal of $R$ then $R/P$ is of Gelfand-Kirillov dimension $\le 1$ and hence is PI.  Since $R$ has bounded matrix images, there is some $m$ such that $R/P$ satisfies the $m$-th standard identity.  In particular, since $R$ does not satisfy this identity, there is some nonzero element of $R$ that is in every nonzero prime ideal of $R$ and so $(0)$ is locally closed.  Next notice that if $P$ is a nonzero rational ideal then $R/P$ has Gelfand-Kirillov dimension $\le 1$ and thus is PI.  Since $P$ is rational it follows that $P$ is maximal and thus locally closed.  It follows that $R$ satisfies the Dixmier-Moeglin equivalence if $R$ has bounded matrix images.  The result follows.
\end{proof}

We now consider rings of the form $R[x;\delta]$ when $R$ is PI. We first give a preliminary result. 
\begin{lemma} \label{lem:Z}
Let $k$ be a field and let $R$ be a finitely generated commutative $k$-algebra of Gelfand-Kirillov dimension $\le 2$ that is an integral domain.  Then if $\delta$ is a $k$-linear derivation of $R$ such that $(0)$ is a rational prime of $R[x;\delta]$ then the intersection of the nonzero $\delta$-invariant prime ideals of $R$ is nonzero.
\end{lemma}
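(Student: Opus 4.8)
The plan is to reinterpret the rationality of $(0)$ as a statement about the constant field of $\delta$, and then read off the conclusion from the geometry of the $\delta$-invariant subvarieties of ${\rm Spec}(R)$.

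First I would dispose of the trivial case and translate the hypothesis. If ${\rm GKdim}(R)=0$ then $R$ is a field, has no nonzero primes, and there is nothing to prove; so assume $R$ is an affine domain of Krull dimension $1$ or $2$. As $R$ is a Noetherian domain, $T=R[x;\delta]$ is a Noetherian domain, and localising at $R\setminus\{0\}$ gives ${\rm Frac}(T)={\rm Frac}(K[x;\delta])$, where $K={\rm Frac}(R)$ and $\delta$ is extended to $K$ by the quotient rule. The constant field $K^{\delta}:=\ker(\delta\colon K\to K)$ is a subfield of $K$, and every element of it commutes with $K$ and with $x$, hence lies in the centre of ${\rm Frac}(T)$; so rationality of $(0)$ forces $K^{\delta}$ to be algebraic over $k$. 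This rules out $\delta=0$ (otherwise $T$ is commutative and its centre has transcendence degree ${\rm GKdim}(R)+1\ge 2$ over $k$) and forces ${\rm char}\,k=0$ (otherwise $K^{\delta}\supseteq K^{p}$, which has transcendence degree ${\rm GKdim}(R)\ge 1$ over $k$). So from now on $k$ has characteristic zero, $\delta\neq 0$, and $\delta$ admits no nonconstant rational first integral.

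Next I would set up the zero locus of $\delta$. Fix a presentation $R=k[a_1,\dots,a_n]$, put $J=(\delta(a_1),\dots,\delta(a_n))$ and $Z=V(J)\subseteq{\rm Spec}(R)$; by the Leibniz rule $\delta(R)\subseteq J$, and $Z$ is a \emph{proper} closed subset since $\delta\neq 0$. Two elementary observations: (i)~if $\mathfrak m$ is a $\delta$-invariant maximal ideal then $\delta$ descends to a $k$-linear derivation of the residue field $R/\mathfrak m$, which is a finite --- hence, in characteristic zero, separable --- extension of $k$ and so has no nonzero derivation; thus $\delta(R)\subseteq\mathfrak m$, so in particular $\mathfrak m\in Z$; (ii)~if $C$ is an irreducible curve contained in $Z$ then $J\subseteq I(C)$, so by the Leibniz rule $\delta(R)\subseteq I(C)$ and in particular $\delta(I(C))\subseteq I(C)$, i.e.\ $I(C)$ is a $\delta$-invariant height-one prime. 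Using (i), the case ${\rm GKdim}(R)=1$ is immediate: every nonzero prime of $R$ is maximal, each $\delta$-invariant one lies in $Z$, and $Z$ --- a proper closed subset of the affine curve ${\rm Spec}(R)$ --- is finite; so there are only finitely many nonzero $\delta$-primes, and their intersection (a finite intersection of nonzero primes of the domain $R$) is nonzero.

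The remaining, and main, case is ${\rm GKdim}(R)=2$; here the heart of the matter is to show that the set $\mathcal P$ of height-one $\delta$-primes of $R$ is finite. Each such prime cuts out an irreducible $\delta$-invariant curve of the affine surface ${\rm Spec}(R)$; I would pass to a smooth projective model $\widetilde X$ of ${\rm Spec}(R)$ (a resolution of a projective closure), on which $\delta$ induces an algebraic foliation whose invariant curves include the (pairwise distinct) closures of these curves. If $\mathcal P$ were infinite, the Darboux--Jouanolou theorem on foliations of projective surfaces would produce a nonconstant rational first integral --- that is, a nonconstant element of $k(\widetilde X)^{\delta}=K^{\delta}$ --- contradicting that $K^{\delta}$ is algebraic over $k$. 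Hence $\mathcal P=\{P_1,\dots,P_r\}$ is finite. To conclude, observe that $Z$ is a proper closed subset of the surface, so it has dimension at most one and only finitely many irreducible components, say the curves $C_1,\dots,C_a$ and the isolated points $\mathfrak n_1,\dots,\mathfrak n_b$, and by (ii) each $I(C_j)$ is one of the $P_i$. By (i), every height-two $\delta$-prime $\mathfrak m$ lies in $Z$, hence equals some $\mathfrak n_l$ or contains some $I(C_j)=P_i$. Therefore $P_1\cap\cdots\cap P_r\cap\mathfrak n_1\cap\cdots\cap\mathfrak n_b$ is contained in every nonzero $\delta$-prime of $R$; being a finite intersection of nonzero primes of the domain $R$, it is nonzero, which finishes the proof. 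I expect the appeal to Darboux--Jouanolou --- together with the routine but slightly technical check that a $\delta$-invariant affine curve extends to an invariant curve of the foliation on a projective model --- to be the only step that is not purely formal.
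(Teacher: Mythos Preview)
Your proof is correct and follows essentially the same route as the paper: reduce to characteristic zero, invoke Jouanolou/Darboux--Jouanolou to show there are only finitely many height-one $\delta$-invariant primes, and then argue that every $\delta$-invariant maximal ideal contains the image of $\delta$. Your packaging via the vanishing locus $Z=V(\delta(a_1),\dots,\delta(a_n))$ and the observation that derivations of finite separable extensions vanish is slightly more geometric than the paper's bare-hands computation (which fixes one $y$ with $\delta(y)\neq 0$, takes the minimal polynomial $p$ of $y$ modulo a $\delta$-invariant maximal ideal $P$, and uses $\gcd(p,p')=1$ to deduce $\delta(y)\in P$), but the underlying idea is identical.
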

\begin{proof} If $k$ is of characteristic $p>0$, then $\delta(a^p)=0$ for every $a\in R$ and so $R[x;\delta]$ is a finite module over its centre, and hence $R[x;\delta]$ is PI and so it satisfies the Dixmier-Moeglin equivalence.  Thus we may assume without loss of generality that $k$ has characteristic zero.  By Fact \ref{fact} it suffices to show that rational prime ideals are locally closed.

Every nonzero prime ideal of $R$ is either height one or maximal (or both if $R$ has Gelfand-Kirillov dimension $1$).   Since $(0)$ is rational, a result of Jouanolou \cite{Jou} (see also  \cite[Theorem 6.1]{BLLM17} for showing the connection) gives that there are only finitely many height one $\delta$-invariant prime ideals of $R$.  Hence it suffices to show that the intersection of the $\delta$-invariant maximal ideals of $R$ is nonzero.  Since $(0)$ is a rational prime ideal of $R[x;\delta]$, there is some $y\in R$ such that $\delta(y)\neq 0$.  Let $P$ be a $\delta$-invariant maximal ideal of $R$.  Then $R/P$ is a finite extension of $k$ and hence there is an irreducible polynomial $p(t)\in k[t]$ such that $p(y)\in P$.  Then since $P$ is $\delta$-invariant, we see $\delta(p(y))=p'(y)\delta(y)\in P$.  Then $p'(t)$ and $p(t)$ have gcd one and so $\delta(y)\in P$. Thus the intersection of the $\delta$-invariant maximal ideals of $R$ contains $\delta(y)$ and so the result follows.
\end{proof}
\begin{lemma} \label{lem:PIcase1}
    Let $R$ be a finitely generated prime PI algebra of Gelfand-Kirillov dimension $< 3$.  Then if $(0)$ is a rational prime ideal of $T:=R[x;\delta]$ then $(0)$ is locally closed. \end{lemma}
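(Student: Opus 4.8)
The plan is to reduce to the commutative situation of Lemma~\ref{lem:Z} by passing to the centre of $R$. If ${\rm char}(k)>0$ then, as in Lemma~\ref{lem:Z}, $T=R[x;\delta]$ is a finite module over a central subalgebra, hence PI, hence satisfies the Dixmier--Moeglin equivalence \cite{Von1}, so $(0)$ is locally closed; thus assume ${\rm char}(k)=0$. Since $R$ is prime affine PI, its centre $Z:=Z(R)$ is (after localising at a suitable central element, which affects neither hypothesis nor conclusion) a finitely generated commutative domain with ${\rm GKdim}(Z)={\rm GKdim}(R)\le 2$, and $\delta$ restricts to a $k$-linear derivation of $Z$ (for $z\in Z$, $r\in R$: $\delta(z)r-r\delta(z)=\delta(zr-rz)=0$, so $\delta(z)\in Z$). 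If $\delta|_Z=0$ then $Z\subseteq Z(T)$ and so ${\rm Frac}(Z)\subseteq Z({\rm Frac}(T))$; rationality of $(0)$ then forces $Z$ to be algebraic over $k$, so $R$ is finite-dimensional over $k$ (it embeds into ${\rm Frac}(R)$, which is finite-dimensional over the field ${\rm Frac}(Z)$), whence $T$ has ${\rm GKdim}\le 1$, is PI \cite{SSW}, and we are done. So assume $\delta|_Z\neq 0$.

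The key point is that $(0)$ is then a rational prime of $Z[x;\delta]$. Put $F={\rm Frac}(Z)$; by Posner's theorem $D:={\rm Frac}(R)$ is a central simple $F$-algebra and ${\rm Frac}(T)={\rm Frac}(D[x;\delta])$. Any $\delta$-constant $f\in F^{\delta}:=\{f\in F:\delta(f)=0\}$ is central in $D$ and commutes with $x$ (as $xf-fx=\delta(f)=0$), so $F^{\delta}\subseteq Z(D[x;\delta])\subseteq Z({\rm Frac}(T))$; by hypothesis the last of these is algebraic over $k$, hence so is $F^{\delta}$. On the other hand, since $\delta|_F\neq 0$ and ${\rm char}(k)=0$, a leading-coefficient computation in the Ore extension $F[x;\delta]$ of the field $F$ shows $Z({\rm Frac}(Z[x;\delta]))=Z({\rm Frac}(F[x;\delta]))=F^{\delta}$. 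Hence $(0)$ is rational in $Z[x;\delta]$, and Lemma~\ref{lem:Z} supplies a nonzero $c\in Z$ contained in every nonzero $\delta$-invariant prime ideal of $Z$.

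To finish I show that every nonzero prime $Q$ of $T$ contains $c$; then the nonzero two-sided ideal $TcT$ lies in the intersection of all nonzero primes of $T$, so $(0)$ is locally closed. First, $Q\cap R\neq (0)$: otherwise $Q$ extends to a nonzero proper ideal of $D[x;\delta]$ (localising at the regular elements of $R$); but writing $D\cong M_n(\Delta)$ with $\Delta$ a division ring, one has $D[x;\delta]\cong M_n(\Delta[x;\delta'])$ for a derivation $\delta'$ of $\Delta$ whose restriction to $Z(\Delta)$ equals $\delta|_F\neq 0$, so $\delta'$ is outer (inner derivations vanish on the centre); in characteristic zero this forces $\Delta[x;\delta']$, and hence $D[x;\delta]$, to be simple --- a contradiction. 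So $Q\cap R$ is a nonzero $\delta$-invariant ideal of $R$ (if $r\in Q\cap R$ then $\delta(r)=xr-rx\in Q\cap R$); it is $\delta$-prime, being the contraction of a prime, hence prime since ${\rm char}(k)=0$; therefore, by Rowen's theorem, $(Q\cap R)\cap Z$ is a nonzero $\delta$-invariant prime of $Z$, so it contains $c$ and thus $c\in Q$, as desired.

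The main obstacle is the middle step --- transferring rationality of $(0)$ from $T$ to $Z[x;\delta]$ --- where one must invoke the PI structure (Posner's and Rowen's theorems, finiteness of the centre) together with the precise identification of the centre of the Goldie quotient ring of $F[x;\delta]$. The reduction to the centre and the argument that every nonzero prime of $T$ meets $R$ are comparatively routine, the latter relying on the classical simplicity criterion for Ore extensions of division rings by outer derivations in characteristic zero.
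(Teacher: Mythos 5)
Your proof is correct and follows essentially the same route as the paper's: reduce to showing that every nonzero prime of $T$ contracts to a nonzero $\delta$-invariant prime of $R$, pass to the centre $Z(R)$ (localized at a central element so that it becomes affine, via Artin--Small--Zhang and Artin--Tate), apply Lemma~\ref{lem:Z} there, and pull the resulting central element back up using the fact that a nonzero prime of a prime PI ring meets the centre nontrivially. The only substantive difference is that where the paper invokes Letzter's result on rational primes in finite extensions to transfer rationality of $(0)$ from $T$ to $Z[1/z][x;\delta]$, you verify this directly by computing $Z({\rm Frac}(F[x;\delta]))=F^{\delta}$, and where the paper simply asserts that ${\rm Frac}(R)[x;\delta]$ is simple you derive it from Amitsur's outer-derivation criterion --- self-contained substitutes for the same steps.
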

\begin{proof}
A finitely generated prime PI algebra has integer Gelfand-Kirillov dimension, so in fract $R$ has Gelfand-Kirillov dimension at most two.
Since $(0)$ is rational, we have ${\rm Frac}(R)[x;\delta]$ is simple and so every nonzero prime ideal $P$ of $T$ intersects $R$ non-trivially and $I:=R\cap P$ is a $\delta$-invariant prime ideal of $R$.  Thus it suffices to show that the intersection of the nonzero $\delta$-invariant prime ideals of $R$ is nonzero.

    Let $Z := Z(R)$ denote the centre of $R$. Then $Z$ is a domain and there exists a nonzero $z\in Z$ such that $R[1/z]$ is a free $Z[1/z]$-module of finite rank \cite[Prop. 4.4(3)]{ASZ}.

Since $R[1/z]$ is a finitely generated algebra, $Z[1/z]$ is also finitely generated by the Artin-Tate lemma (see \cite[Proposition 2]{SM}).  Then $\delta$ induces a derivation of $R[1/z]$, which then restricts to a derivation of $Z[1/z]$.  Then since $(0)$ is a rational prime ideal of $R[1/z][x;\delta]$, $(0)$ is also a rational prime ideal of $Z[1/z][x;\delta]$ \cite[Corollary 1.2]{Letzter}.

Now by Lemma \ref{lem:Z} we have that the intersection of the nonzero $\delta$-invariant prime ideals of $Z[1/z]$ is nonzero and hence contains a nonzero element $u$.  We now claim that $u$ is in every nonzero $\delta$-invariant prime ideal of $R[1/z]$.  To see this, let $P$ be a nonzero $\delta$-invariant prime ideal of $R[1/z]$.  Then $I:=P\cap Z[1/z]$ is nonzero by Posner's theorem and is evidently $\delta$-invariant. Then let $Q_1,\ldots ,Q_s$ be the minimal prime ideals above $I$.  Then by \cite[14.2.3]{McR}, $Q_1,\ldots ,Q_s$ are all $\delta$-invariant and hence $u\in \sqrt{I}=Q_1\cap\cdots \cap Q_s$.  It follows that $u^n\in I$ for some $n$
and so $u^n\in P$.  But now $u$ is central in $R$ and since $I$ is prime we then have $u\in I$ and hence $u\in P$.  Thus we obtain the claim. 

Finally, note that every nonzero $\delta$-invariant prime ideal of $R$ either contains $z$, or it survives in the localization $R[1/z]$, and hence contains $u$, and so the intersection of the nonzero $\delta$-invariant prime ideals of $R$ is nonzero, since $R$ is itself a prime ring.
\end{proof}


\begin{proposition}\label{CasenonPI}
    Let $k$ be a field, let $R$ be a prime noetherian $k$-algebra of Gelfand-Kirillov dimension less than three, and let $\delta$ be a $k$-linear derivation of $R$.  If $R$ satisfies both the Nullstellensatz and the Dixmier-Moeglin equivalence and if $R$ is not PI then $T:=R[x;\delta]$ satisfies the Dixmier-Moeglin equivalence whenever $T$ satisfies the Nullstellensatz.
    \end{proposition}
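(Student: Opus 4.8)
The plan is to use Fact~\ref{fact}: since $T$ is assumed to satisfy the Nullstellensatz, it suffices to show that every rational prime $P\in{\rm Spec}(T)$ is locally closed. The hypotheses on $R$ feed in as follows. As $R$ is prime noetherian, not PI, and of Gelfand--Kirillov dimension $<3$, the result of \cite{BS} gives that $(0)$ is a rational prime of $R$, so (rationality being defined via the centre of ${\rm Frac}(R)$) the field $Z({\rm Frac}(R))$ is algebraic over $k$; since $R$ satisfies the Dixmier--Moeglin equivalence, $\{(0)\}$ is locally closed in ${\rm Spec}(R)$, i.e.\ the nonzero primes of $R$ have nonzero intersection, and intersecting only over the nonzero $\delta$-invariant primes yields a nonzero $\delta$-invariant ideal $B\subseteq R$ contained in every nonzero $\delta$-invariant prime of $R$. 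By Bergman's gap theorem together with \cite{SSW} we also have ${\rm GKdim}(R)\in[2,3)$, so ${\rm GKdim}(T)\le{\rm GKdim}(R)+1<4$, and a proper quotient of the prime noetherian ring $T$ has strictly smaller Gelfand--Kirillov dimension.

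Fix a rational prime $P$ of $T$ and put $I:=P\cap R$; a standard computation shows $I$ is a $\delta$-prime (hence semiprime) ideal of $R$ whose minimal primes are $\delta$-invariant. Suppose first $I\ne(0)$. Then $T/IT\cong(R/I)[x;\delta]$ is a prime ring, and since its minimal $\delta$-invariant primes $Q_1,\dots,Q_s$ over $I$ are nonzero, a product-of-ideals argument against primeness forces some $Q_j=I$; thus $I$ is prime and $R/I$ is a prime noetherian algebra of Gelfand--Kirillov dimension $<2$, hence PI by \cite{SSW}. If ${\rm GKdim}(T/P)\le 1$ then $T/P$ is PI, and being rational it is then finite over a field (Posner's theorem and the Artin--Tate lemma), hence artinian, so $P$ is maximal. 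Otherwise ${\rm GKdim}(T/P)\ge 2$ while $(R/I)[x;\delta]$ is prime of dimension $\le{\rm GKdim}(R/I)+1<3$, which forces $P=IT$; then $(0)$ is a rational prime of $(R/I)[x;\delta]$ and Lemma~\ref{lem:PIcase1} shows it is locally closed, so $P$ is locally closed in ${\rm Spec}(T)$.

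Now suppose $I=P\cap R=(0)$. The key dichotomy is whether ${\rm Frac}(R)[x;\delta]$ is simple. If it is, then a nonzero prime $P$ with $P\cap R=(0)$ would survive to a nonzero proper ideal of ${\rm Frac}(R)[x;\delta]$, which is impossible, so $P=(0)$; and then every nonzero prime $Q$ of $T$ meets $R$ nontrivially, whence $Q\cap R$ is a nonzero semiprime $\delta$-ideal containing $B$, so $BT\subseteq Q$, and $(0)$ is locally closed in ${\rm Spec}(T)$. If ${\rm Frac}(R)[x;\delta]$ is not simple, then (by reducing a nonzero ideal to a monic element of minimal degree) it contains a central element that is a polynomial of positive degree in $x$, hence transcendental over $Z({\rm Frac}(R))$ and, by the first paragraph, over $k$; therefore $(0)$ is not rational in $T$ and we may assume $P\ne(0)$. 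Localising $T$ at the regular elements of $R$ turns $P$ into a nonzero proper prime $\mathfrak P$ of ${\rm Frac}(R)[x;\delta]$; reduction by a monic element shows ${\rm Frac}(R)[x;\delta]/\mathfrak P$ is a finite left ${\rm Frac}(R)$-module, hence a simple artinian ring equal to ${\rm Frac}(T/P)$, so ${\rm Frac}(T/P)$ is finite over ${\rm Frac}(R)$ and $R\hookrightarrow T/P$. Consequently the regular elements of $R$ are regular in $T/P$ and invert to give ${\rm Frac}(T/P)$, so every nonzero prime $Q'$ of $T/P$ meets $R$ nontrivially in a semiprime $\delta$-ideal containing $B$; thus $B$ lies in every nonzero prime of $T/P$ and $(0)$ is locally closed in ${\rm Spec}(T/P)$, i.e.\ $P$ is locally closed.

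I expect the main obstacle to be the case $I=(0)$: namely, establishing the dichotomy between ${\rm Frac}(R)[x;\delta]$ being simple and containing a central element transcendental over $k$ --- in effect, the structure theory of ($X$-inner, and in positive characteristic quasi-algebraic) derivations of the simple artinian ring ${\rm Frac}(R)$ --- and, on the other side, verifying that contractions $Q\cap R$ of primes of $T$ (or of $T/P$) are semiprime $\delta$-ideals, so that the single fixed ideal $B$, and not merely varying powers of it, is trapped inside every nonzero prime. The hypothesis ${\rm GKdim}(R)<3$ enters precisely to make the auxiliary quotients $R/I$ and $T/P$ either PI or of small enough Gelfand--Kirillov dimension for Lemma~\ref{lem:PIcase1} and the arguments above to apply.
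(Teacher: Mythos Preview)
Your argument is correct and follows essentially the same strategy as the paper's proof: reduce via Fact~\ref{fact} to showing rational primes are locally closed, use that $R$ non-PI forces $(0)$ rational in $R$ (so $(0)$ is locally closed in ${\rm Spec}(R)$), invoke Lemma~\ref{lem:PIcase1} when the contraction to $R$ is nonzero, and when the contraction is zero exploit the structure of ${\rm Frac}(R)[x;\delta]$ to force every strictly larger prime to meet $R$.

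The only real organisational difference is that the paper splits first on whether $(0)$ is rational in $T$, whereas you split first on whether $I=P\cap R$ is zero and then, in the $I=0$ branch, on whether ${\rm Frac}(R)[x;\delta]$ is simple. Your minimal-degree monic computation shows these splits are equivalent (non-simplicity produces a central element of positive $x$-degree, hence transcendental over $k$), so the two proofs are really the same argument seen from two sides. You are in fact more explicit than the paper in two places: you justify why $P\cap R$ is genuinely prime (via $\delta$-primeness and $\delta$-invariance of the minimal primes), and in the $I\neq 0$ case you give a clean GK-dimension dichotomy (either $T/P$ has ${\rm GKdim}\le 1$ and is PI, or $P=IT$ and Lemma~\ref{lem:PIcase1} applies directly), where the paper simply appeals to Lemma~\ref{lem:PIcase1}. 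The obstacle you flagged---the simple/transcendental-centre dichotomy for ${\rm Frac}(R)[x;\delta]$---is exactly the pivot the paper uses without spelling out, and your sketch of it is the standard argument.
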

\begin{proof}
We first consider the case when $(0)$ is a rational prime ideal of $T$.  In this case ${\rm Frac}(R)[x;\delta]$ is simple and hence every nonzero ideal of $T$ intersects $R$ non-trivially.  In particular, if $P$ is a nonzero prime ideal of $T$ then $P\cap R$ is a nonzero $\delta$-invariant prime ideal of $R$ and so the intersection of the nonzero prime ideals of $T$ contains the intersection of the nonzero prime ideals of $R$.

Since $R$ is not PI, $(0)$ is a rational prime ideal of $R$ and so the intersection of the nonzero prime ideals of $R$ is nonzero, since $R$ satisfies the Dixmier-Moeglin equivalence.  Thus $(0)$ is locally closed in ${\rm Spec}(T)$.  So we have shown that if $(0)$ is rational then $(0)$ is locally closed.  Now suppose that $P$ is a nonzero rational prime ideal of $T$ and let $I=P\cap R$.  As noted above, $I\neq (0)$ and $I$ is a $\delta$-invariant prime ideal of $R$.  Then $R/I$ is a prime ring of Gelfand-Kirillov dimension $\le 1$ and $T/P$ is a homomorphic image of $(R/I)[x;\delta]$.  By the Small-Warfield theorem \cite{SW}, $R/I$ is a finite module over its centre, which is invariant under $\delta$.
Then by Lemma \ref{lem:PIcase1} we see that $(0)$ is locally closed in $T/P$.  Hence we have shown that rational prime ideals of $T$ are locally closed when $(0)$ is rational prime ideal of $T$.  

Next we consider the case when $(0)$ is not a rational prime ideal of $T$ and let $P$ be a rational prime ideal of $T$.  If $I:=P\cap R\neq (0)$, then as in the preceding case we have that $T/P$ is a homomorphic image of $(R/I)[x;\delta]$ and since $R/I$ has Gelfand-Kirillov dimension $\le 1$, we see that $(R/I)[x;\delta]$ satisfies the Dixmier-Moeglin equivalence as before by reducing to Proposition \ref{prop:GK2}.  Thus we may assume that $P\cap R=(0)$.  Then $P$ survives in the localization 
${\rm Frac}(R)[x;\delta]$ and generates a nonzero prime ideal $\widetilde{P}$.  Then since ${\rm Frac}(R)$ is simple, $\widetilde{P}$ contains a nonzero monic polynomial in $x$ and so 
${\rm Frac}(R)[x;\delta]/\widetilde{P}$ is a finite ${\rm Frac}(R)$-module and thus is artinian; since it is prime, it is simple.  It follows that every prime ideal $Q$ that strictly contains $P$ must intersect $R$ non-trivially and as noted before, $Q\cap R$ is then a nonzero prime ideal of $R$.  Since $(0)$ is a rational prime ideal of $R$ and $R$ satisfies the Dixmier-Moeglin equivalence, we see that the intersection of the nonzero ideals of $R$ is nonzero and so the intersection of the prime ideals of $T$ that strictly contain $P$ is nonzero.  The result follows.
\end{proof}

   \begin{theorem}
    Let $k$ be a field, let $R$ be a finitely generated prime noetherian $k$-algebra of Gelfand-Kirillov dimension less than three that satisfies the Nullstellensatz, and let $\delta$ be a $k$-linear derivation of $R$. If $R$ satisfies the Dixmier-Moeglin equivalence and if $T = R[x,\delta]$ satisfies the Nullstellensatz then $T$ also satisfies the Dixmier-Moeglin equivalence.
    \label{thm:Leon}
\end{theorem}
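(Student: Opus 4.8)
The plan is to reduce, via Fact~\ref{fact} and the hypothesis that $T$ satisfies the Nullstellensatz, to showing that every rational prime $P$ of $T$ is locally closed in $\mathrm{Spec}(T)$, and then to split on whether or not $R$ is a PI algebra. If $R$ is not PI, then $R$ meets all the hypotheses of Proposition~\ref{CasenonPI} — it is a finitely generated (hence noetherian) prime $k$-algebra of Gelfand-Kirillov dimension less than three satisfying both the Nullstellensatz and the Dixmier-Moeglin equivalence — and $T$ satisfies the Nullstellensatz by assumption, so Proposition~\ref{CasenonPI} immediately gives that $T$ satisfies the Dixmier-Moeglin equivalence, and in particular $P$ is locally closed. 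So the substance of the argument is the case in which $R$ is PI.

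Assume then that $R$ is a finitely generated prime PI algebra; its Gelfand-Kirillov dimension is an integer and so at most two, and by Proposition~\ref{prop:Zhang} (the automorphism being the identity, which preserves every frame) $\mathrm{GKdim}(T)=\mathrm{GKdim}(R)+1\le 3$. I will use the following elementary observation repeatedly: for a noetherian algebra $A$ satisfying the Nullstellensatz and a rational prime $P$ of $A$, if $A/P$ satisfies the Dixmier-Moeglin equivalence then $\{P\}$ is locally closed in $\mathrm{Spec}(A)$; indeed $(0)$ is then a rational, hence locally closed, point of $\mathrm{Spec}(A/P)$, so $\mathrm{Spec}(A/P)\setminus\{(0)\}$ is closed, and transporting this along the homeomorphism $\mathrm{Spec}(A/P)\cong V(P)$ shows that $V(P)\setminus\{P\}=\overline{\{P\}}\setminus\{P\}$ is closed in $\mathrm{Spec}(A)$. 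Thus it suffices to prove that $T/P$ satisfies the Dixmier-Moeglin equivalence for each rational prime $P$ of $T$.

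To do this, set $I=P\cap R$; this is a $\delta$-invariant prime ideal of $R$ (it is the contraction of a prime, and $\delta(r)=xr-rx$ lies in $P\cap R$ whenever $r\in I$), $IT$ is a prime ideal of $T$ by Goodearl's result, and $T/P$ is a homomorphic image of $(R/I)[x;\delta]$. Since the Dixmier-Moeglin equivalence passes to homomorphic images, it is enough to show that $S[x;\delta]$ satisfies the Dixmier-Moeglin equivalence whenever $S$ is a finitely generated prime PI algebra of Gelfand-Kirillov dimension at most two carrying a $k$-linear derivation $\delta$ (the Nullstellensatz for $S[x;\delta]$ being inherited from $T$, or from Fact~\ref{fact2} applied to the centre). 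Now run through the rational primes $P'$ of $S[x;\delta]$, setting $I'=P'\cap S$. If $I'=(0)$ and $P'=(0)$, then Lemma~\ref{lem:PIcase1} applies directly and shows $(0)$ is locally closed. If $I'=(0)$ and $P'\neq(0)$, then since $S$ is prime PI the ring $\mathrm{Frac}(S)$ is simple artinian, so the extension of $P'$ to $\mathrm{Frac}(S)[x;\delta]$ is a nonzero prime containing a monic polynomial in $x$; hence $S[x;\delta]/P'$ embeds in a ring that is module-finite over $\mathrm{Frac}(S)$ (which is itself finite over its centre, being a PI division ring up to matrices) and is therefore PI, of Gelfand-Kirillov dimension at most $\mathrm{GKdim}(S)<3$, so it satisfies the Dixmier-Moeglin equivalence, and the observation above (applied inside $S[x;\delta]$) gives that $\{P'\}$ is locally closed. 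Finally, if $I'\neq(0)$, then $S/I'$ is a finitely generated prime PI algebra of strictly smaller Gelfand-Kirillov dimension, and $S[x;\delta]/P'$ is a homomorphic image of $(S/I')[x;\delta]$, so we are back to the same problem with $S$ replaced by $S/I'$; iterating, the process terminates because the Gelfand-Kirillov dimension strictly decreases, and at the bottom the algebra has Gelfand-Kirillov dimension at most one and is PI by the Small-Stafford-Warfield theorem.

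The main obstacle is the PI case, and inside it the sub-case $P'\cap S=(0)$ with $P'\neq(0)$: the crux is to pin down the structure of $S[x;\delta]/P'$ well enough to see it is PI, which requires combining the Ore-extension structure with the fact that $\mathrm{Frac}(S)$ is a simple artinian — hence finite-dimensional over its centre — ring, and checking that the resulting module-finite extension is itself PI. The remaining work is bookkeeping with Goodearl's trichotomy, the passage to quotients, and the already-established Lemma~\ref{lem:PIcase1}, Proposition~\ref{prop:GK2}, and Proposition~\ref{CasenonPI}.
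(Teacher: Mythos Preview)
Your argument is correct and follows essentially the same route as the paper's: split on whether $R$ is PI, invoke Proposition~\ref{CasenonPI} in the non-PI case, and in the PI case pass to $(R/I)[x;\delta]$ and handle the sub-cases $P'=(0)$, $P'\neq(0)$ with $P'\cap S=(0)$, and $P'\cap S\neq(0)$ using Lemma~\ref{lem:PIcase1}, the PI property of the localized quotient, and a descent on Gelfand--Kirillov dimension, respectively; the paper organizes the last two sub-cases slightly differently but uses the same ingredients. Two small cosmetic points: your parenthetical appeal to Fact~\ref{fact2} is misplaced since $R$ need not be commutative (though the Nullstellensatz does pass to the quotient $T/IT$, which is what you need), and in the $I'=(0)$, $P'\neq(0)$ case the Gelfand--Kirillov bound you state is unnecessary and not obviously justified---it is enough that $S[x;\delta]/P'$ is affine PI.
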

\begin{proof}
If $R$ is not PI this follows from Proposition \ref{CasenonPI}.
Thus we may assume that $R$ is PI and it suffices to show that a rational prime ideal of $T$ is locally closed.  If $(0)$ is a rational prime ideal of $T$ then $(0)$ is locally closed by Lemma \ref{lem:PIcase1}.  Thus it suffices to show that every nonzero rational prime ideal $P$ is locally closed.  Let $I:=P\cap R$.  If $I=(0)$, then $P$ survives in the localization ${\rm Frac}(R)[x;\delta]$, and since $P$ is nonzero, the corresponding ideal, $\tilde{P}$, in ${\rm Frac}(R)[x;\delta]$ contains a monic polynomial in $x$ and so ${\rm Frac}(R)[x;\delta]/\tilde{P}$ is a finite module over ${\rm Frac}(R)$ and hence is PI, which contradicts the fact that $P$ is rational unless $R$ is simple and hence of Gelfand-Kirillov dimension zero.  In this case $T$ is of Gelfand-Kirillov dimension one and hence PI and so $T$ satisfies the Dixmier-Moeglin equivalence in this case.  On the other hand, if $I$ is nonzero then
$T/P$ is a homomorphic image of $S:=(R/I)[x;\delta]$, and $R/I$ is a finitely generated prime algebra of Gelfand-Kirillov dimension $\le 1$ and hence is PI. If $(0)$ is a rational prime ideal of $S$ then $(0)$ is locally closed by Lemma \ref{lem:PIcase1}.  If $(0)$ is not rational then since $S$ is of Gelfand-Kirillov dimension two, it is PI by \cite{BS}.  But $P$ corresponds to a rational prime ideal $P'$ of $S$ with $P'\cap S=(0)$, and so this is impossible since $S$ is PI. The result follows.
\end{proof}

\section{Proof of Theorem \ref{thm:main}}
In this section we give the proof of Theorem \ref{thm:main}.  We first need two preliminary lemmas.

\begin{lemma}
\label{lem:R0}
Let $k$ be an algebraically closed field, let $R$ be a commutative $k$-algebra with a frame-preserving $k$-algebra automorphism $\sigma$ and $\sigma$-derivation $\delta$ and let $T=R[x;\sigma,\delta]$.  If $\sigma$ is not the identity map and $\delta$ is not inner and $R$ is $\sigma$-prime then $R_0:=\{r\in R\colon \sigma(r)=r\}$ is central in $T$.
\end{lemma}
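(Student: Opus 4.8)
The plan is to reduce the claim that $R_0$ is central in $T$ to the single assertion $\delta(R_0)=0$, and then to deduce the latter from $\sigma$-primeness. Since $R$ is commutative, every element of $R_0$ already commutes with all of $R$, and as $T$ is generated as a $k$-algebra by $R$ together with $x$, it is enough to check that each $c\in R_0$ commutes with $x$. The defining relation $xc=\sigma(c)x+\delta(c)$ together with $\sigma(c)=c$ gives $xc=cx+\delta(c)$, so $c$ commutes with $x$ exactly when $\delta(c)=0$. Thus the lemma is equivalent to $\delta$ vanishing on $R_0$.

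The key computation would be the identity $\delta(c)\,\bigl(s-\sigma(s)\bigr)=0$ for all $c\in R_0$ and $s\in R$. To get this, expand $\delta(cs)$ and $\delta(sc)$ using the $\sigma$-Leibniz rule $\delta(ab)=\sigma(a)\delta(b)+\delta(a)b$; in the first expansion use $\sigma(c)=c$, and then equate the two expansions using $cs=sc$. The terms $c\,\delta(s)$ cancel, leaving $\delta(c)s=\sigma(s)\delta(c)$, i.e. $\delta(c)(1-\sigma)(s)=0$.

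Finally I would bring in $\sigma$-primeness. Let $J$ be the ideal of $R$ generated by $(1-\sigma)(R)=\{\,s-\sigma(s):s\in R\,\}$. Since $\sigma\neq\mathrm{id}$ we have $J\neq 0$, and since $\sigma$ maps the generating set $(1-\sigma)(R)$ bijectively onto itself, $J$ is $\sigma$-invariant. By the previous step $\delta(c)$ annihilates each generator of $J$, hence $\delta(c)J=0$, so $\delta(c)$ lies in $A:=\mathrm{Ann}_R(J)$; and $A$ is again a $\sigma$-invariant ideal, because $\sigma(A)J=\sigma(A)\sigma(J)=\sigma(AJ)=0$. Now $A$ and $J$ are $\sigma$-invariant ideals with $AJ=0$ and $J\neq 0$, so $\sigma$-primeness of $R$ forces $A=0$, whence $\delta(c)=0$. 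As $c\in R_0$ was arbitrary, this proves the lemma.

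There is no serious obstacle in this argument; the only points needing care are the cancellation in the Leibniz computation and the verification that the annihilator $A$ inherits $\sigma$-invariance from $J$. It is worth noting that the hypothesis that $\delta$ is not inner is not needed for this statement: if $\delta$ were inner, say $\delta(r)=ar-\sigma(r)a$ for some $a\in R$, then for $c\in R_0$ one would have $\delta(c)=ac-ca=0$ directly from commutativity of $R$. Presumably it is included only to match the hypotheses under which the lemma is applied.
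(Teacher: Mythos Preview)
Your proof is correct, and it takes a genuinely different route from the paper's. The paper uses the frame-preserving hypothesis together with $k$ being algebraically closed to produce a nonzero $\sigma$-eigenvector $f\in R$ with eigenvalue $\lambda\neq 1$; the hypothesis that $\delta$ is not inner is invoked precisely to exclude the alternative possibility that $\sigma$ acts unipotently on the frame (i.e.\ that there is $f$ with $\sigma(f)-f=1$, in which case the same Leibniz computation forces $\delta$ to be inner). The paper then quotes an external lemma to the effect that such an $f$ is regular in a $\sigma$-prime ring, and finishes from the single relation $(\sigma(f)-f)\delta(r)=0$.

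You instead carry out the Leibniz computation for arbitrary $s\in R$, obtaining $\delta(c)(1-\sigma)(R)=0$, and then invoke $\sigma$-primeness directly via the annihilator argument. This is both more elementary (no eigenvector analysis, no external citation for regularity) and strictly more general: your argument does not use that $k$ is algebraically closed, that $\sigma$ preserves a frame, or that $\delta$ is not inner---as you observe in your last paragraph. The one payoff of the paper's approach is that the explicit eigenvector $f$ is not throwaway: it is reused later in the proof of the main proposition to pass to the localization $R[1/f]$ where $\delta$ becomes inner.
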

\begin{proof}
Suppose that $\sigma$ is not the identity map.  Then since $\sigma$ preserves a frame, either there is some nonzero $f\in R$ with $\sigma(f)=\lambda f$ and $\lambda\neq 1$ or there is some $f\in R$ with $\sigma(f)-f=1$.  In the latter case, $\delta$ is inner, so we may assume there is some nonzero $f$ with $\sigma(f)=\lambda f$ with $\lambda\neq 1$.  Now by \cite[Lemma 4.14]{BSM18} we have $f$ is regular.  Now let $r\in R_0$.  Then expanding both sides of $\delta(rf)=\delta(fr)$ and since $\sigma(r)=r$ and $\sigma(f)=\lambda f$ we then see that $(\sigma(f)-f)\delta(r) = 0$ and so $\delta(r)=0$.  Now this means every element of $R_0$ commutes with $x$ and so $R_0$ is central in $T$, as claimed.  
\end{proof}
\begin{lemma}
Let $k$ be an algebraically closed field, let $R$ be a finitely generated commutative $k$-algebra, let $\sigma$ be a frame-preserving $k$-algebra automorphism of $R$, and let $\delta$ be a $\sigma$-derivation of $R$.  Suppose that $R$ is $\sigma$-prime, $\sigma$ is not the identity map, $\delta$ is not inner, $(0)$ is a rational prime of $T$, and suppose that there exists a prime ideal of $T$ of type (c).  Then there is a nonzero $\delta$-prime ideal $I$ of $R$ such that $P\cap R=I$ for every prime ideal $P$ of type (c). \label{lem:typec} 
\end{lemma}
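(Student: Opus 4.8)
The plan is to exploit the rationality of $(0)$ to collapse the fixed ring of $\sigma$ to $k$; this forces the ideal generated by $(1-\sigma)(R)$ to be a maximal ideal $\mathfrak{a}$, which must then be the radical of $P\cap R$ for every type (c) prime $P$, and the remaining (harder) point is to upgrade ``same radical'' to ``same ideal''.

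First I would record that $T$ is a prime ring, since $(0)$ is a rational, hence prime, ideal of $T$. By Lemma~\ref{lem:R0} the subring $R_0:=\{r\in R\colon\sigma(r)=r\}$ is central in $T$. Since nonzero central elements of a prime ring are regular (if $c$ is central and $cTa=0$ then $c=0$ or $a=0$), $R_0$ has no zero divisors, so it is a domain, and its nonzero elements become invertible in the Goldie ring of quotients ${\rm Frac}(T)$; hence ${\rm Frac}(R_0)\subseteq Z({\rm Frac}(T))$. As $(0)$ is rational and $k$ is algebraically closed, $Z({\rm Frac}(T))=k$, and therefore $R_0=k$.

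Next, fix a frame $V$ of $R$ preserved by $\sigma$. Because $R_0=k$ we have $V^\sigma=k\cdot 1$, so the operator $1-\sigma$ on $V$ has one-dimensional kernel and $(1-\sigma)(V)$ is a subspace of $V$ of codimension one. Expanding $(1-\sigma)$ on products, via $(1-\sigma)(vw)=\bigl((1-\sigma)v\bigr)w+\sigma(v)\bigl((1-\sigma)w\bigr)$ and $R=\bigcup_n V^n$, shows that the ideal $\mathfrak{a}$ of $R$ generated by $(1-\sigma)(R)$ equals the ideal generated by $(1-\sigma)(V)$; consequently the image of $V$ in $R/\mathfrak{a}$ is spanned by the image of $1$, so $R/\mathfrak{a}$ is either $0$ or $k$. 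Now let $P$ be of type (c) and set $I=P\cap R$. By Goodearl's description of case (c) recalled above, $R/I$ has a unique associated prime $\mathfrak{q}/I$, so $\sqrt{I}=\mathfrak{q}$, and $(1-\sigma)(R)\subseteq\mathfrak{q}$; thus $\mathfrak{a}\subseteq\mathfrak{q}$. Since $\mathfrak{q}$ is a proper prime, $\mathfrak{a}\neq R$, so $\mathfrak{a}$ is a maximal ideal and $\mathfrak{q}=\mathfrak{a}$. In particular $\sqrt{P\cap R}=\mathfrak{a}$ for every type (c) prime $P$, the ideal $P\cap R$ is $\mathfrak{a}$-primary, and $P\cap R\neq(0)$ (a type (c) contraction is not $\sigma$-prime, whereas $(0)$ is $\sigma$-prime because $R$ is $\sigma$-prime).

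It remains to show that the $\mathfrak{a}$-primary $(\sigma,\delta)$-ideal $I=P\cap R$ is actually independent of the type (c) prime $P$; this is the crux of the lemma. Here $R/I$ is a finite-dimensional local $k$-algebra with residue field $k$, carrying the induced automorphism $\sigma$ and $\sigma$-derivation $\delta$, and $I$ is $\delta$-prime but not $\sigma$-prime. Because $I$ is not $\sigma$-prime, the maximal ideal $\mathfrak{m}:=\mathfrak{a}/I$ of $R/I$ is nonzero, say $\mathfrak{m}^n=0$ with $n\geq 2$ minimal; and because $I$ is $\delta$-prime, $\mathfrak{m}$ cannot be $\delta$-stable, since otherwise $\mathfrak{m}^{n-1}$ would be $\delta$-stable as well (using that $\mathfrak{m}$ is both $\sigma$- and $\delta$-stable) and $\mathfrak{m}\cdot\mathfrak{m}^{n-1}=0$ with $\mathfrak{m},\mathfrak{m}^{n-1}$ nonzero $\delta$-ideals of $R/I$. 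I would exploit this rigidity together with $R_0=k$ — which severely restricts the $\sigma$-action, and hence the $(\sigma,\delta)$-stable subspaces, of the graded pieces $\mathfrak{a}^{j}/\mathfrak{a}^{j+1}$ — to pin down the $\mathfrak{a}$-primary $(\sigma,\delta)$-ideals that can occur as a type (c) contraction, showing there is exactly one. That common ideal is then nonzero and $\delta$-prime, as required. The main obstacle is precisely this last step; everything before it is a formal consequence of Lemma~\ref{lem:R0}, Goodearl's trichotomy, and the standing hypotheses.
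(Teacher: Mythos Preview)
Your argument up through identifying the common associated prime is essentially the paper's argument: you use Lemma~\ref{lem:R0} to get $R_0=k$, then the codimension-one image of $1-\sigma$ on frames to force the unique associated prime $\mathfrak{q}$ of $R/I$ to be the fixed maximal ideal $\mathfrak{a}$, independent of $P$. That part is fine and matches the paper almost verbatim.

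The gap is exactly where you flag it. You propose to pin down $I$ among all $\mathfrak{a}$-primary $(\sigma,\delta)$-ideals by a structural analysis of the $\sigma$-action on the layers $\mathfrak{a}^j/\mathfrak{a}^{j+1}$, but you do not carry this out, and it is not clear that $R_0=k$ alone gives enough rigidity to make it work. The paper avoids this entirely by invoking a second fact from Goodearl's analysis of case~(c) \cite[Proposition~3.11]{Good}: for a type~(c) prime $P$, the contraction $I=P\cap R$ is the \emph{largest $\delta$-prime ideal of $R$ contained in} $Q$. Since you have already shown that $Q=Q(P)$ is the same ideal $\mathfrak{a}$ for every type~(c) prime, ``largest $\delta$-prime contained in $\mathfrak{a}$'' is a description that depends only on $\mathfrak{a}$ and $\delta$, not on $P$; hence $I=I'$ immediately. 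This is the missing idea: you are using only half of Goodearl's description of case~(c) (the unique associated prime containing $(1-\sigma)(R)$) and not the other half (the maximality of $I$ among $\delta$-primes inside $Q$), and that second half is precisely what converts ``same radical'' into ``same ideal'' with no further work.
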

\begin{proof}
Let $P$ be a rational prime ideal of $T$ of type (c). 
Goodearl shows there is a unique associated prime ideal $Q=Q(P)$ of $R/I$ and $\sigma(r)-r\in Q$ for all $r\in R$.  
Let $W$ be a $\sigma$-invariant frame of $R$.  Then since $(0)$ is rational and $R$ is $\sigma$-prime, by Lemma \ref{lem:R0} we have that $R_0=k$, where $R_0=\{r\colon \sigma(r)=r\}$.  It follows that $(\sigma-{\rm id})|_W$ has one-dimensional kernel and so the image of $(\sigma-{\rm id})|_W$ has codimension one in $W$.  Thus $W/Q\cap W$ is one-dimensional.  Since this is true for every $\sigma$-invariant frame and since $R=\bigcup W^n$, we then see that $R/Q$ is a $1$-dimensional $k$-vector space.   Furthermore this argument shows that 
$$Q={\rm Span}_k\{r-\sigma(r)\colon r\in R\}.$$
Hence we have shown that if $P$ and $P'$ are two primes of type (c) then 
$$Q(P)=Q(P')={\rm Span}_k\{r-\sigma(r)\colon r\in R\}.$$  
Now suppose that we have two distinct primes $P$ and $P'$ of type (c).  Then let $Q=Q(P)=Q(P')$ and let $I=R\cap P$ and $I'=R\cap P'$.  Goodearl \cite[Proposition 3.11]{Good} shows that $I$ and $I'$ are the largest $\delta$-prime ideals contained in $Q$ and so $I=I'$.
The result follows.
\end{proof}

 \begin{proposition} \label{prop:main} Let $k$ be an algebraically closed field, let $R$ be a finitely generated reduced commutative $k$-algebra of Gelfand-Kirillov dimension two, let $\sigma$ be a $k$-algebra automorphism of $R$ that preserves a frame, and let $\delta$ be a $k$-linear $\sigma$-derivation of $R$.  Then $T:=R[x;\sigma,\delta]$ satisfies the Dixmier-Moeglin equivalence.
\end{proposition}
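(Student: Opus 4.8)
The plan is to reduce, just as in the proof of Proposition~\ref{prop:GK2}, to showing that every rational prime $P$ of $T$ is locally closed: $T$ satisfies the Nullstellensatz by Fact~\ref{fact2}, so Fact~\ref{fact} applies. I would then run Goodearl's trichotomy. A prime of type (a) is immediate, since then $T/P$ is commutative and $P$ is rational if and only if it is maximal if and only if $\{P\}$ is locally closed. For $P$ of type (b) or (c) I would set $I:=P\cap R$: then $I$ is $(\sigma,\delta)$-prime, $IT$ is a two-sided ideal with $T/IT\cong (R/I)[x;\sigma,\delta]$ prime by \cite[Prop.~3.3]{Good}, $R/I$ is again finitely generated commutative with ${\rm GKdim}(R/I)\le 2$, a $\sigma$-invariant frame of $R$ maps onto one of $R/I$, and $R/I$ is reduced (a $\sigma$-prime ideal of a Noetherian ring is radical; in characteristic zero so is a $\delta$-prime ideal --- the positive characteristic case being disposed of as in Lemma~\ref{lem:Z}). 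Since $V(IT)$ is closed in ${\rm Spec}(T)$, it suffices to show $\{P/IT\}$ is locally closed in ${\rm Spec}(T/IT)$, so I may assume $T$ is prime, $P\cap R=(0)$, and, by Proposition~\ref{prop:Zhang}, that ${\rm GKdim}(R)\in\{0,1,2\}$ and ${\rm GKdim}(T)\le 3$.

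If ${\rm GKdim}(R)\le 1$ then ${\rm GKdim}(T)\le 2$ and Proposition~\ref{prop:GK2} applies ($R$ is reduced and $\sigma$ preserves a frame), so the content is in the case ${\rm GKdim}(R)=2$, ${\rm GKdim}(T)=3$. If $\sigma={\rm id}$ then $(0)=P\cap R$ is $\delta$-prime, so $R$ is a reduced $\delta$-prime and hence an affine commutative domain; being affine over a field it is Jacobson, so it satisfies the Nullstellensatz and the Dixmier-Moeglin equivalence, and since ${\rm GKdim}(R)=2<3$, Theorem~\ref{thm:Leon} applies (the Nullstellensatz hypothesis on $T$ being Fact~\ref{fact2}). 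So I am left with: ${\rm GKdim}(R)=2$, $T$ prime, $\sigma\neq{\rm id}$, $P\cap R=(0)$. I would first observe that here $R$ is automatically $\sigma$-prime: if not, then $(0)\in{\rm Spec}(T)$ is of type (c), so by Goodearl the reduced ring $R$ has a unique associated prime $Q$ with $(1-\sigma)(R)\subseteq Q$; but a reduced ring with a unique associated prime is a domain, so $Q=(0)$ and $\sigma={\rm id}$, a contradiction.

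If $P\neq(0)$ (and still $P\cap R=(0)$), then just as in the proof of Theorem~\ref{thm:Leon} I would pass to the localization of $T$ at the regular elements of $R$: $P$ survives there and generates an ideal containing a polynomial whose leading coefficient is invertible, so $T/P$ is module-finite over a localization of $R$, hence is an affine PI algebra, hence satisfies the Dixmier-Moeglin equivalence \cite{Von1}; as $(0)$ is rational in $T/P$ it is locally closed there, so $\{P\}$ is locally closed in the closed set $V(P)$ and thus in ${\rm Spec}(T)$. (When $R$ is not a domain one works with its total quotient ring $\prod_i{\rm Frac}(R/\mathfrak{p}_i)$, noting $\sigma$ permutes the minimal primes $\mathfrak{p}_i$ and that these are $\delta$-invariant since $R$ is reduced of characteristic zero.) The remaining case is $P=(0)$, so $(0)$ is rational and $T$ is noncommutative; fix a nonzero commutator $\theta\in T$. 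By Lemma~\ref{lem:typec}, if $T$ has any prime of type (c) then all of them contract to a single nonzero $\delta$-prime ideal $I_0$ of $R$, so ${\rm GKdim}(R/I_0)\le 1$ and, by Proposition~\ref{prop:GK2}, every prime in the closed set $V(I_0T)$ is locally closed; it therefore remains to handle primes of type (a) and (b). Next, if $\delta$ is inner I change variables so that $\delta=0$, and then $R_0:=\{r\in R:\sigma(r)=r\}$ is central in $R[x;\sigma]$, hence (being a reduced affine $k$-algebra, $k$ algebraically closed, with $T$ prime) algebraic over $k$ and equal to $k$; if $\delta$ is not inner the same conclusion $R_0=k$ follows from Lemma~\ref{lem:R0} and the rationality of $(0)$, exactly as in the proof of Lemma~\ref{lem:typec}.

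With $R_0=k$: $\sigma$ must have infinite order, since otherwise $R$ would be module-finite over $R_0=k$, contradicting ${\rm GKdim}(R)=2$. Now I would sort the nonzero primes $Q$ of $T$. A type (a) prime contains $\theta$. No nonzero prime has $Q\cap R=(0)$, since $\sigma$ has infinite order on the total quotient ring of $R$ and so the corresponding skew polynomial ring is simple (a finite product of simple rings), forcing such a $Q$ to be $(0)$. Otherwise $J:=Q\cap R$ is a nonzero $(\sigma,\delta)$-prime, $Q\supseteq JT$, and $\sigma$-invariance forces $J\not\subseteq\mathfrak{p}_i$ for any minimal prime, so ${\rm GKdim}(R/J)\le 1$; if ${\rm GKdim}(R/J)=0$ then $R/J=k$, $T/JT\cong k[x]$ is commutative, and $\theta\in JT\subseteq Q$; if ${\rm GKdim}(R/J)=1$ then $J$ is a height-one $(\sigma,\delta)$-invariant prime of $R$. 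The crucial input is then a Jouanolou-type finiteness statement --- in the spirit of Lemma~\ref{lem:Z}, \cite{Jou} and \cite[Theorem~6.1]{BLLM17}, and crucially using that $\sigma$ preserves a frame --- producing only finitely many minimal such ideals $J_1,\dots,J_m$, with $Q\supseteq J_iT$ for some $i$. Then every nonzero prime of $T$ contains one of the finitely many nonzero two-sided ideals $T\theta T,J_1T,\dots,J_mT$; since $T$ is prime their product is a nonzero two-sided ideal contained in every nonzero prime, so $\{(0)\}$ is locally closed and we are done. I expect this last step to be the main obstacle: proving finiteness of the height-one $(\sigma,\delta)$-invariant primes of the two-dimensional algebra $R$ --- the dynamical, Jouanolou-type input, which is exactly where preservation of a frame is used --- and, in parallel, the simplicity of the relevant skew polynomial ring over the fraction ring of $R$ once $R_0=k$ and $\sigma$ has infinite order, which is what rules out nonzero primes of $T$ lying over $(0)$ in $R$.
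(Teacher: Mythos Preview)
Your reductions---to $T$ prime with $P\cap R=(0)$, and the disposal of $\sigma={\rm id}$, of type (a), of type (c) via Lemma~\ref{lem:typec}, and of $P\neq(0)$ via the PI argument---track the paper closely. The gap is exactly where you flag it, in handling type (b) primes when $P=(0)$, and it has two parts. First, the assertion that ${\rm Frac}(R)[x;\sigma,\delta]$ is simple once $\sigma$ has infinite order is false: over a field $K$ with $\sigma\neq{\rm id}$ every $\sigma$-derivation is inner (the identity $(\sigma(a)-a)\delta(b)=(\sigma(b)-b)\delta(a)$ gives $\delta=c(\sigma-{\rm id})$ with $c=\delta(a)/(\sigma(a)-a)$), so $K[x;\sigma,\delta]\cong K[x';\sigma]$ and $(x')$ is a nonzero prime. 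That particular prime contracts to one of type (a) and so is caught by $\theta$, but your reasoning as stated does not show this. Second, and more substantially, the ``Jouanolou-type finiteness'' for height-one $(\sigma,\delta)$-invariant primes that you need is not available off the shelf in the mixed case---Lemma~\ref{lem:Z} and \cite{Jou,BLLM17} treat derivations only---and you do not prove it.

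The paper does not attempt any such finiteness theorem. Instead, since $\sigma$ preserves a frame, $\sigma\neq{\rm id}$, and $\delta$ is not inner, the argument of Lemma~\ref{lem:R0} produces a nonzero $f\in R$ with $\sigma(f)=\lambda f$ for some $\lambda\neq 1$ (the alternative $\sigma(f)-f=1$ would force $\delta$ inner). Then $(\lambda-1)f$ is a unit in $R[1/f]$, so the same identity as above makes $\delta$ inner there, and $R[1/f][x;\sigma,\delta]\cong R[1/f][x';\sigma]$. One now invokes \cite[Theorem~1.6]{BWW}: $\sigma$ still preserves a frame, so this automorphism-type Ore extension satisfies the Dixmier--Moeglin equivalence, and rationality of $(0)$ forces $(0)$ to be locally closed there. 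This controls all type (b) primes in which $f$ is regular modulo the contraction to $R$; the remaining type (b) primes all contain $f$ by \cite[Lemma~4.14]{BSM18}. In other words, the dynamical input you correctly anticipated is supplied by \cite{BWW}, accessed through localization at a $\sigma$-eigenvector rather than by proving a new Jouanolou-type theorem for the mixed $(\sigma,\delta)$ setting.
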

\begin{proof}
If $\sigma$ is the identity then the result follows by Theorem \ref{thm:Leon}, since every prime ideal of $T$ intersects $R$ in a prime ideal, and so we may assume that $R$ is prime in this case; if $\delta$ is inner, then $R[x;\sigma,\delta]\cong R[x;\sigma]$ and the result follows by \cite[Theorem 1.6]{BWW}.\footnote{Although the theorem in \cite{BWW} has a hypothesis that the base field be uncountable, this is only used to ensure that $R[x;\sigma]$ satisfies the Nullstellensatz.  But since $R$ is commutative, we can use Fact \ref{fact2} in this case and then follow the original proof.}
Thus for the remainder of the proof, we assume that $\sigma$ is not the identity and $\delta$ is not inner.

We now divide the proof into two cases.
\vskip 2mm
{\bf Case I.} $P$ is a rational prime ideal of ${\rm Spec}(T)$ with $P\cap R$ nonzero.
\vskip 2mm
Then either $P$ is of type (a) in which case $T/P$ is commutative and hence $P$ must be maximal since $P$ is rational; alternatively, $P$ is of type (b) or (c) and since $I:=P\cap R$ is a $(\sigma,\delta)$-invariant ideal and $R$ is reduced,
we see that either $R/I$ is a reduced algebra of Gelfand-Kirillov dimension two or $R/I$ is of Gelfand-Kirillov dimension $\le 1$.  In the latter case, since $T/P$ is a homomorphic image of $S:=T/IT\cong (R/I)[x;\sigma,\delta]$ and since $\sigma$ preserves a frame, we have $S$ is of Gelfand-Kirillov dimension two and thus satisfies the Dixmier-Moeglin equivalence by Proposition \ref{prop:GK2}.  It follows that $P$ is locally closed in ${\rm Spec}(T)$ in this case.

Thus it only remains to show that the Dixmier-Moeglin equivalence holds when $R/I$ is a reduced ring of Gelfand-Kirillov dimension two. Observe that since $P$ is rational we have that $T/P$ is not commutative, since otherwise it would be a field and since $T$ satisfies the Nullstellensatz, $T/P$ would have Gelfand-Kirillov dimension $0$. 
We claim that $I$ is $\sigma$-prime.  If not, $P$ falls into case (c) of Goodearl's trichotomy and so $R/I$ has a unique associated prime, which contains $(1-\sigma)(R)$.   But $I$ is radical and since $R$ is commutative all minimal primes of $R/I$ are associated primes and so we see that $I$ is in fact prime and hence $\sigma$-prime, since it is invariant under $\sigma$.

 It now suffices to show that the intersection of the prime ideals that properly contain $P$ of each type (a)--(c) properly contains $P$. The intersection of prime ideals of type (c) that contain $P$ again properly contains $P$ by Lemma \ref{lem:typec}; the intersection of primes that contain $P$ of type (a) again properly contains $P$ since it contains all commutators and $T/P$ is noncommutative by assumption.  Finally, we show that the intersection of primes that properly contain $P$ of type (b) again properly contains $P$.  By Lemma \ref{lem:R0}, $R_0=\{r\in R\colon \sigma(r)=r\}$ is equal to $k$ and since $\sigma$ preserves a frame and $\delta$ is not inner, there is some nonzero $f$ in $R$ such that $\sigma(f)=\lambda f$ with $\lambda\neq 1$. Then since $\sigma(f)-f$ is invertible in $R[1/f]$ we have $\delta$ is inner in 
$R[1/f][x;\sigma,\delta]$ and so $R[1/f][x;\sigma,\delta]\cong R[1/f][x;\sigma]$.  
By \cite[Lemma 4.14]{BSM18} we have $f$ is regular mod $I:=P\cap R$ unless $f\in I$, and so the intersection of primes $P$ of type (b) for which $f$ is not regular mod $P\cap R$ contains the element $f$.  Hence it suffices to show that the intersection of nonzero prime ideals of type (b) for which $f$ is regular mod $I:=P\cap R$ is nonzero.
But since $\sigma$ preserves a frame, $R[1/f][x;\sigma]$ satisfies the Dixmier-Moeglin equivalence \cite{BWW} and so the intersection of prime ideals $P$ of type (b) for which $f$ is regular mod $P\cap R$ is nonzero.  The result follows.
\vskip 2mm
{\bf Case II.} $P$ is a rational prime ideal of ${\rm Spec}(T)$ and $P\cap R=(0)$.
\vskip 2mm
Since $P\cap R=(0)$, $P$ survives in the localization ${\rm Frac}(R)[x;\sigma,\delta]$.  We let $\tilde{P}$ denote the ideal $P{\rm Frac}(R)[x;\sigma,\delta]$.  First, if $P$ is nonzero then $\tilde{P}$ is nonzero and hence contains a nonzero monic polynomial in $x$.  In particular, ${\rm Frac}(R)[x;\sigma,\delta]/\tilde{P}$ is a finite module over ${\rm Frac}(R)$ and hence is PI.  Thus $T/P$ is PI since ${\rm Frac}(R)[x;\sigma,\delta]/\tilde{P}$ is a localization of this ring.  But now $T/P$ is PI, it satisfies the Dixmier-Moeglin equivalence and so $P$ is locally closed in this case.  Thus we may assume that $P=(0)$.  In this case, we see as in Case I that $R$ is $\sigma$-prime, and we may argue as in Case I to show that $(0)$ is locally closed. The result follows.
\end{proof}
\begin{proof}[Proof of Theorem \ref{thm:main}] 
Suppose first that $R$ is an integral domain of Gelfand-Kirillov dimension $\ge 2$.  Since $T$ has Gelfand-Kirillov dimension less than $4$, a result of Zhang \cite{Zhang} shows that $\sigma$ preserves a frame of $R$ and that $R$ has Gelfand-Kirillov dimension two.  Since $T$ satisfies the Nullstellensatz, to prove that $T$ satisfies the Dixmier-Moeglin equivalence, it suffices to show that rational prime ideals of $T$ are locally closed in ${\rm Spec}(T)$.  Since $R$ is reduced, we have $R_K:=R\otimes_k K$ is reduced and is a finitely generated $K$-algebra whenever $K$ is a field extension of $k$, and $T_K:=T\otimes_k K\cong R_K[x;\sigma,\delta]$ is left noetherian, Jacobson, and satisfies the Nullstellensatz.  Hence to show that $T$ satisfies the Dixmier-Moeglin equivalence, using Irving-Small reduction (see \cite[Theorem 4.2.27]{Row2}), it suffices to show that $T_K$ satisfies the Dixmier-Moeglin equivalence for every algebraically closed extension of $k$.  Since $\sigma$ preserves a $K$-frame of $R_K$ and $R_K$ is reduced, $T_K$ satisfies the Dixmier-Moeglin equivalence by Proposition \ref{prop:main}, and so the result now follows in this case.

Thus we may assume that $R$ has Gelfand-Kirillov dimension zero or one.  If $R$ has Gelfand-Kirillov dimension zero, then $T$ is of Gelfand-Kirillov dimension one and hence is PI and so the result holds \cite{Von1}.  Thus we may assume that $R$ has Gelfand-Kirillov dimension one.  Then $R\otimes_k K$ is a reduced $K$ algebra whenever $K$ is an algebraically closed extension of $k$ and so the map induced by $\sigma$ preserves a frame of $R\otimes_k K$ by Lemma \ref{lem:reduced} and so using Irving-Small reduction and Proposition \ref{prop:main} again gives the result.
\end{proof}
\begin{remark} We note that the characteristic zero hypothesis is used when invoking the Irving-Small reduction techniques.
\end{remark}
\section{Errata to \cite{BWW}}
We finish this paper by taking the opportunity to list some corrections to the paper \cite{BWW}, which in many ways this paper builds upon.  The first-named author is indebted to Zhipeng Chen, who discovered the following issues.  

\begin{enumerate}
\item In the Assumptions on page 7, we should also have the assumption: all prime ideals of $R$ are completely prime and that $(0)$ is a rational prime ideal of $R[x;T]$.  In particular, Lemmas 3.2 and 3.3 should have these hypotheses (which are implicitly used in the proofs).  
\item In Lemma 3.4, there should be the assumption that $R$ satisfies the Dixmier-Moeglin equivalence. 
\end{enumerate}


\end{document}